\newtheorem{lemma}{Lemma}[section]
\newtheorem{corollary}[lemma]{Corollary}
\newtheorem{theorem}[lemma]{Theorem}
\newtheorem{proposition}[lemma]{Proposition}
\theoremstyle{definition}
\newtheorem{remark}[lemma]{Remark}
\newtheorem{definition}[lemma]{Definition}
\newtheorem{example}[lemma]{Example}
\begin{document}
\title{The commutative core of a Leavitt path algebra}

\author{Crist\'{o}bal Gil Canto}
\address{Departamento de \'{A}lgebra, Geometr\'{\i}a y Topolog\'{\i}a\\
Universidad de M\'{a}laga\\
29071, M\'{a}laga, Spain}
\email{cristogilcanto@gmail.com / cgilc@uma.es}

\author{Alireza Nasr-Isfahani}
\address{Department of Mathematics\\
University of Isfahan\\
P.O. Box: 81746-73441, Isfahan, Iran\\ and School of Mathematics, Institute for Research in Fundamental Sciences (IPM), P.O. Box: 19395-5746, Tehran, Iran}
\email{nasr$_{-}$a@sci.ui.ac.ir / nasr@ipm.ir}

\subjclass[2010]{{16W99}, {16D70}}

\keywords{Leavitt path algebra, commutative ring with unit, commutative core, uniqueness theorem}

\begin{abstract} For any unital commutative ring $R$ and for any graph $E$, we identify the commutative core of the Leavitt path algebra of $E$ with coefficients in $R$, which is a maximal commutative subalgebra of the Leavitt path algebra. Furthermore, we are able to characterize injectivity of representations which gives a generalization of the Cuntz-Krieger uniqueness theorem.
\end{abstract}

\maketitle


\section{Introduction}

For a commutative unital ring $R$, the Leavitt path algebras are a specific type of path $R$-algebras associated to a graph $E$, modulo some relations. Initially, the Leavitt path algebra $L_K(E)$ was introduced for row-finite graphs (countable graphs such that every vertex emits only a finite number of edges) and for an arbitrary field $K$ in \cite{AA1} and \cite{AMP}; later extending the definition to arbitrary graphs in \cite{AA2}. Tomforde in \cite{T} generalizes the construction of Leavitt path algebras by replacing the field $K$ with a commutative unital ring $R$.

Leavitt path algebras are the algebraic version of Cuntz-Krieger graph $C^*$-algebras: a class of algebras intensively investigated by analysts for more than two decades (see \cite{R} for an overview of the subject). On the other hand, they can be considered as natural generalizations of Leavitt algebras $L(1,n)$ of type $(1,n)$, investigated by Leavitt in \cite{L} in order to give examples of algebras not satisfying the IBN property (i.e., whose modules have bases with different cardinality).

Leavitt path algebras of graphs include many well-known algebras such as matrix rings ${\mathbb M}_n(R)$ for $n\in \mathbb{N}$, the Laurent polynomial ring $R[x,x^{-1}]$, or the classical Leavitt algebras $L(1,n)$ for $n\ge 2$.

The algebraic and analytic theories share important similarities, but also present some remarkable differences. The relation between these two classes of graph algebras has been mutually beneficial. This is the case once more for the topic discussed in the current paper: the analytic result was given in \cite{NR} for the $C^*$-algebras $C^*(E)$, and we give here the algebraic analogue for the Leavitt path algebras $L_R(E)$.

The paper is organized as follows. In Section 2 we give all the
background information, together with the definition of $L_R(E)$ and
some basic properties. Also we introduce Cuntz-Krieger $E$-systems,
the analog relations of Leavitt path algebras in a general
$R$-algebra. In Section 3 we give a particular representation of the
Leavitt path algebra (Proposition \ref{Piaprepresentation}) on a
specific $R$-algebra related to the set of all essentially aperiodic
trails in the graph, that is, the set of all infinite aperiodic
paths, as well as those infinite paths that are periodic (those that end
in a cycle without exits) together with all finite paths whose range
is a sink.

In Section 4 we introduce the commutative core $M_R(E)$ of the
Leavitt path algebra $L_R(E)$ and we prove one of the main results of
this paper: Theorem \ref{Mismaximal}. This theorem says that
$M_R(E)$ is a maximal commutative subalgebra inside $L_R(E)$. For
this purpose we previously show Theorem \ref{EM}, where we prove the
existence of an algebraic conditional expectation onto the
subalgebra $M_R(E)$.

Finally Section 5 is dedicated to the uniqueness theorems for
Leavitt path algebras, i.e., those which set conditions on the graph
$E$ or on the map $\Phi$ in order to ensure that a representation
$\Phi:L_R(E) \rightarrow \mathcal{A}$ is injective. Analogously to
\cite[Theorem 3.13]{NR} where it is proved a uniqueness theorem in
the spirit of Szymanski's Uniqueness Theorem for graph
$C^{\ast}$-algebras (see \cite[Theorem 1.2]{S}), Theorem
\ref{UniquenessThm} says in particular that a representation of
$L_R(E)$ is injective if and only if it is injective on the
commutative core $M_R(E)$. This theorem also generalizes
\cite[Theorem 3.7]{AMMS} for fields and the Cuntz-Krieger Uniqueness
Theorem given in \cite[Theorem 6.5]{T} for graphs in which every
cycle has an exit, the so-called Condition (L).


\section{Preliminaries}

A (directed) {\it graph} $E=(E^{0},E^{1},r,s)$ consists of two sets $E^{0}$ and $E^{1}$ together with maps $r,s:E^{1}\rightarrow E^{0}$. The elements of $E^{0}$ are called \textit{vertices} and the elements of $E^{1}$ \textit{edges}. If a vertex $v$ emits no edges, that is, if $s^{-1}(v)$ is empty, then $v$
is called a \textit{sink}. A vertex $v$ is called a \textit{regular vertex} if $s^{-1}(v)$ is a finite non-empty set. The set of regular vertices is denoted by $E^0_{\text{reg}}$.

A path $\mu $ in a graph $E$ is a finite sequence of edges $\mu
=e_{1}\dots e_{n}$ such that $r(e_{i})=s(e_{i+1})$ for $i=1,\dots
,n-1$. In this case, $n=l(\mu)$ is the length of $\mu $; we view the
elements of $E^{0}$ as paths of length $0$. For any $n\in {\mathbb
N}$ the set of paths of length $n$ is denoted by $E^n$. Also,
$\text{Path}(E)$ stands for the set of all paths, i.e.,
$\text{Path}(E)=\bigcup_{n\in {\mathbb N \cup \{0\}}} E^n$. We
denote by $\mu ^{0}$ the set of the vertices of the path $\mu $,
that is, the set $\{s(e_{1}),r(e_{1}),\dots ,r(e_{n})\}$.

A path $\mu $ $=e_{1}\dots e_{n}$ is \textit{closed} if $r(e_{n})=s(e_{1})$, in which case $\mu $ is said to be {\it based at the vertex} $s(e_{1})$. The closed path $\mu $ is called a \textit{cycle} if it does not pass through any of its vertices twice, that is, if $s(e_{i})\neq s(e_{j})$ for every $i\neq j$. A cycle of length one is called a \textit{loop}. An \textit{exit} for a path $\mu =e_{1}\dots e_{n}$ is an edge $e$ such that $s(e)=s(e_{i})$ for some $i$ and $e\neq e_{i}$. We say that $E$ satisfies \textit{Condition }(L) if every simple closed path in $E$ has an exit, or, equivalently, every cycle in $E$ has an exit.

Given paths $\alpha$, $\beta$, we say $\alpha \leq \beta$ if $\beta = \alpha \alpha'$, for some path $\alpha'$.

For each $e\in E^{1}$, we call $e^{\ast }$ a {\it ghost edge}. We let $r(e^{\ast}) $ denote $s(e)$, and we let $s(e^{\ast })$ denote $r(e)$.

\begin{definition}\label{Lpa} {\rm Given an arbitrary graph $E$ and a commutative ring with unit $R$, the \textit{Leavitt path algebra with coefficients in $R$}, denoted $L_{R}(E)$, is the universal $R$-algebra generated by a set $\{v:v\in E^{0}\}$ of pairwise orthogonal idempotents together with a set of variables $\{e,e^{\ast }:e\in E^{1}\}$ which satisfy the following
conditions:

(1) $s(e)e=e=er(e)$ for all $e\in E^{1}$.

(2) $r(e)e^{\ast }=e^{\ast }=e^{\ast }s(e)$\ for all $e\in E^{1}$.

(3) (The ``CK-1 relations") For all $e,f\in E^{1}$, $e^{\ast}e=r(e)$ and $
e^{\ast}f=0$ if $e\neq f$.

(4) (The ``CK-2 relations") For every regular vertex $v\in E^{0}$,
\begin{equation*}
v=\sum_{\{e\in E^{1},\ s(e)=v\}}ee^{\ast}.
\end{equation*}}
\end{definition}

An alternative definition for $L_R(E)$ can be given using the extended graph $\widehat{E}$. This graph has the same set of vertices $E^0$ and same set of edges $E^1$ together with the so-called ghost edges $e^*$ for each $e\in E^1$, whose directions are opposite to those of the corresponding $e\in E^1$. Thus, $L_R(E)$ can be defined as the usual path algebra $R\widehat{E}$ with coefficients in $R$ subject to the Cuntz-Krieger relations (3) and (4) above.

If $\mu = e_1 \dots e_n$ is a path in $E$, we write $\mu^*$ for the element $e_n^* \dots e_1^*$ of $L_{K}(E)$. With this notation it can be shown that the Leavitt path algebra $L_{R}(E)$ can be viewed as a
$$L_{R}(E) = \text{span}_{R}\{\alpha\beta^*: \alpha,\beta \in \text{Path}(E) \text{ and } r(\alpha)=r(\beta)\}$$
and $r v \neq 0$ for all $v \in Path(E)$ and all $r \in
R\setminus\{0\}$ (see Propositions 3.4 and 4.9 of \cite{T}). (The
elements of $E^0$ are viewed as paths of length $0$, so that this
set includes elements of the form $v$ with $v\in E^0$).

We can define an $R$-linear involution $x \mapsto x^\ast$ on $L_R(E)$ as follows: if $x = \sum_{i=1}^n r_i \alpha_i \beta_i^{\ast}$ then $x^{\ast} = \sum_{i=1}^n r_i \beta_i \alpha_i^{\ast}$.

If $E^0$ is finite, then $L_{R}(E)$ is unital with $\sum_{v\in E^0} v=1_{L_{R}(E)}$; otherwise, $L_{R}(E)$ is a ring with a set of local units (i.e., a set of elements $X$ such that for every finite collection $a_1,\dots,a_n\in L_R(E)$, there exists $x\in X$ such that $a_ix=a_i=xa_i$) consisting of sums of distinct vertices of the graph.

Another useful property of $L_R(E)$ is that it is a graded algebra.
Note that any ring $R$ may be viewed as a $\mathbb{Z}$-ring in the
natural way (set $R_0 = R$ and $R_n = 0$ for every $0\neq n \in
\mathbb{Z}$). Then $L_R(E)$ can be decomposed as a direct sum of
homogeneous components $L_R(E)=\bigoplus_{n\in {\mathbb Z}}
L_R(E)_n$ satisfying $L_R(E)_nL_R(E)_m\subseteq L_R(E)_{n+m}$.
Actually, $$L_R(E)_n=\text{span}_R\{pq^*: p,q\in \text{Path}(E) ,
l(p)-l(q)=nÊ \}.$$

Every element $x_n\in L_R(E)_n$ is a homogeneous element of degree $n$.

Given this background information let us start by defining the analogous relations given for a Leavitt path algebra when we consider a general $R$-algebra with involution.

\begin{definition}\label{CuntzKriegerEsystem} {\rm Let $E$ be a graph and $\mathcal{A}$ be an $R$-algebra with involution $\ast$. A \textit{Cuntz-Krieger $E$-system in $\mathcal{A}$} is a collection $\Sigma = (S_{\mu})_{\mu \in E^0 \cup E^1} \subset \mathcal{A}$ which satisfies the following relations:

(1) For all $v, w \in E^0$, $S_vS_v = S_v$ and $S_vS_w = 0$ if $v\neq w$.

(2) $S_v^{\ast} = S_v$ for all $v \in E^0$.

(3) $S_{s(e)}S_{e} = S_{e} = S_{e}S_{r(e)}$ for all $e\in E^{1}$.

(4) For all $e,f\in E^{1}$, $S_e^{\ast}S_e=S_{r(e)}$ and $S_e^{\ast}S_f=0$ if $e\neq f$.

(5) For every regular vertex $v\in E^{0}$,
\begin{equation*}
S_v=\sum_{\{e\in E^{1},\ s(e)=v\}}S_eS_e^{\ast}.
\end{equation*}}
\end{definition}

For convenience we are going to extend the Cuntz-Krieger $E$-system $\Sigma \subset \mathcal{A}$ by defining all elements $S_\mu \in \mathcal{A}$, $\mu \in \text{Path}(E)$, considering $S_\mu S_\nu = S_{\mu\nu}$ if $r(\mu) = s(\nu)$ and $S_\mu S_\nu = 0$ otherwise. So we shall abuse the notation and write it as an extended system $\Sigma = (S_{\mu})_{\mu \in \text{Path}(E)}$.

There exists a new version given for paths of the Cuntz-Krieger relation (5) defined above.

\begin{proposition}\label{Sv} Let $v \in E^0$ and $k \in \mathbb{N}$. If there are finitely many paths $\mu$ with $s(\mu) = v$ and $l(\mu) \leq k$, then
\begin{equation*}
S_v = \sum_{\{s(\mu)=v ,\ l(\mu)=k \}}S_\mu S_{\mu}^{\ast} + \sum_{\{s(\mu)=v ,\ l(\mu)< k \text{ and } r(\mu) \text{ is a sink } \}}S_\mu S_{\mu}^{\ast}.
\end{equation*}
\end{proposition}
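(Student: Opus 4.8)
The plan is to prove the identity by induction on $k$, using the original Cuntz-Krieger relation (5) from Definition \ref{CuntzKriegerEsystem} as the base case and as the engine of the inductive step.

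\medskip

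First I would handle the base case $k=1$. Here the paths $\mu$ with $s(\mu)=v$ and $l(\mu)\le 1$ are $v$ itself together with the edges emitted by $v$; the finiteness hypothesis then says $s^{-1}(v)$ is finite. If $v$ is a sink, both sums on the right-hand side are empty except that $v$ itself appears in the second sum (as a path of length $0$ whose range $v$ is a sink), giving $S_v=S_vS_v^{\ast}=S_v$ by relations (1),(2). If $v$ is a regular vertex, the second sum is empty (no path of length $<1$ from $v$ has a sink as range, since $r(v)=v$ is not a sink) and the first sum is exactly $\sum_{s(e)=v}S_eS_e^{\ast}$, which equals $S_v$ by relation (5). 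So the base case reduces to the given relations.

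\medskip

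For the inductive step, assume the formula holds for some $k$ and that there are finitely many paths $\mu$ with $s(\mu)=v$ and $l(\mu)\le k+1$; note this forces the same finiteness at level $k$. Starting from the level-$k$ expression for $S_v$, I would rewrite each term $S_\mu S_\mu^{\ast}$ with $l(\mu)=k$: if $r(\mu)$ is a sink, leave it (it belongs in the second sum at level $k+1$); if $r(\mu)$ is a regular vertex $w:=r(\mu)$, then finiteness at level $k+1$ guarantees $s^{-1}(w)$ is finite, so relation (5) gives $S_w=\sum_{s(e)=w}S_eS_e^{\ast}$, whence $S_\mu S_\mu^{\ast}=S_\mu S_w S_w S_\mu^{\ast}=\sum_{s(e)=w}S_{\mu e}S_{\mu e}^{\ast}$ using $S_\mu S_{r(\mu)}=S_\mu$ and the extended multiplication convention. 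The crucial point — and the one needing a moment's care — is that every path $\nu$ with $s(\nu)=v$ and $l(\nu)=k+1$ arises \emph{exactly once} this way, namely by writing $\nu=\mu e$ with $l(\mu)=k$ and then $e\in s^{-1}(r(\mu))$, and that no new ``sink'' terms of intermediate length are created. Collecting the unchanged sink-terms (lengths $<k$, plus the lengths-$k$ sink terms just left alone) with the newly produced length-$(k+1)$ terms yields precisely the claimed formula at $k+1$.

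\medskip

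The main obstacle is bookkeeping rather than any deep idea: one must verify that the case split ``$r(\mu)$ a sink vs.\ regular'' is exhaustive for the paths appearing (i.e., $r(\mu)$ cannot be an infinite emitter, which is exactly what the finiteness hypothesis at the higher level rules out), and that the reindexing $\mu\mapsto \mu e$ is a bijection onto the level-$(k+1)$ paths so there is no double-counting and nothing is missed. Once that combinatorial accounting is pinned down, the algebraic manipulations are just repeated applications of relations (1)--(5) and the convention $S_\mu S_\nu=S_{\mu\nu}$.
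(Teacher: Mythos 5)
Your proof is correct. The paper does not actually write out an argument for this proposition --- it simply defers to \cite[Proposition 1.3]{NR} (with the source/range conventions reversed) --- and the induction on $k$ you give, splitting the length-$k$ terms according to whether $r(\mu)$ is a sink or a regular vertex (the finiteness hypothesis excluding infinite emitters), expanding the regular ones via relation (5), and checking that $\nu\mapsto(\mu,e)$ is a bijection onto the length-$(k+1)$ paths, is precisely the standard argument behind that citation.
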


\begin{proof} The proof follows completely \cite[Proposition 1.3]{NR}. We only have to consider that a path $e_1e_2\ldots e_n$ in \cite{NR} corresponds to the path $e_ne_{n-1}\ldots e_1$ here, that is, the edges in \cite{NR} are multiplied so that the action of $f$ precedes the action of $e$ in the product $ef$; contrary to the action we consider here. In particular when \cite{NR} refers to a source in our case it exactly corresponds to a sink.
\end{proof}

Analogously to \cite[Equation (2.1)]{T} for Leavitt path algebras, we set here the following proposition for future reference.

\begin{proposition}\label{Gsigma} Given a Cuntz-Krieger $E$-system $\Sigma = (S_{\mu})_{\mu \in \text{Path}(E)}$ in some $R$-algebra $\mathcal{A}$ with involution, the collection
$$ G(\Sigma) = \{ S_\mu S_\nu^{\ast}: \mu, \nu \in \text{Path}(E), r(\mu) = r(\nu)\}$$
satisfies the following:
\begin{itemize}
\item[(i)] given paths $\mu, \nu \in \text{Path}(E)$ with $r(\mu) = r(\nu)$, we have $(S_\mu S_\nu^{\ast})^{\ast} = S_\nu S_\mu^{\ast}$;
\item[(ii)] given four paths $\alpha, \beta, \mu, \nu \in \text{Path}(E)$ with  $r(\alpha) = r(\beta)$ and $r(\mu) = r(\nu)$ then
\begin{align*}
 (S_\alpha S_\beta^{\ast})(S_\mu S_\nu^{\ast}) & = \begin{cases}
S_{\alpha\mu'}S_\nu^{\ast} & \text{if }\ \mu=\beta \mu' \text{ for some } \mu' \in \text{Path}(E),\\
S_{\alpha}S_{\nu\beta'}^{\ast} & \text{if }\ \beta=\mu \beta' \text{ for some } \beta' \in \text{Path}(E),\\
0 & \text{otherwise. }\  \end{cases}
\end{align*}
\end{itemize}
\end{proposition}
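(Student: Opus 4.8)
The plan is to verify the two items directly from the Cuntz–Krieger relations, treating (i) as a warm-up and spending the real effort on the multiplication rule (ii). For item (i), I would simply apply the $R$-linear involution termwise: since $\ast$ is an anti-automorphism fixing each $S_v$ and sending $S_e \mapsto S_e^\ast$, it sends $S_\mu = S_{e_1}\cdots S_{e_n}$ to $S_{e_n}^\ast\cdots S_{e_1}^\ast = S_\mu^\ast$, and hence $(S_\mu S_\nu^\ast)^\ast = (S_\nu^\ast)^\ast S_\mu^\ast = S_\nu S_\mu^\ast$. This also shows $G(\Sigma)$ is closed under $\ast$, though that is not literally asserted.

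For item (ii), the heart of the matter is computing $S_\beta^\ast S_\mu$ for two paths $\beta,\mu$ with, a priori, no relation between them; once that is known, the displayed formula follows by inserting it into $(S_\alpha S_\beta^\ast)(S_\mu S_\nu^\ast) = S_\alpha (S_\beta^\ast S_\mu) S_\nu^\ast$ and using associativity together with $S_\alpha S_{\mu'} = S_{\alpha\mu'}$ (valid because $r(\alpha) = r(\beta) = s(\mu')$) and $S_{\beta'}^\ast S_\nu^\ast = (S_\nu S_{\beta'})^\ast = S_{\nu\beta'}^\ast$. So I would prove the key sublemma: writing $\beta = f_1\cdots f_m$ and $\mu = g_1\cdots g_k$, one has $S_\beta^\ast S_\mu = S_{f_m}^\ast\cdots S_{f_1}^\ast S_{g_1}\cdots S_{g_k}$, and the inner product $S_{f_1}^\ast S_{g_1}$ is $S_{r(f_1)}$ if $f_1 = g_1$ and $0$ otherwise by relation (4). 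Proceeding by induction on $\min(m,k)$: if $f_1\ne g_1$ the whole product collapses to $0$ (the ``otherwise'' case, since then neither $\mu = \beta\mu'$ nor $\beta = \mu\beta'$ can hold); if $f_1 = g_1$, then $S_{f_1}^\ast S_{g_1} = S_{r(f_1)}$ gets absorbed into the neighboring terms via relation (3), reducing to the same problem for $f_2\cdots f_m$ and $g_2\cdots g_k$. The induction terminates when one path is exhausted: if $\beta$ runs out first we are left with $S_{\beta}^\ast S_\mu = S_{r(\beta)}S_{g_{m+1}}\cdots S_{g_k} = S_{\mu'}$ where $\mu = \beta\mu'$, giving the first case; symmetrically if $\mu$ runs out first we get $S_{\beta'}^\ast$ with $\beta = \mu\beta'$, giving the second case; and if they are exhausted simultaneously ($\beta = \mu$) both the first case (with $\mu' = r(\mu)$) and the second (with $\beta' = r(\mu)$) read $S_\alpha S_\nu^\ast$, so the formula is unambiguous. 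Note the two cases genuinely overlap only when $\mu = \beta$, where they agree, so the piecewise definition is consistent.

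The main obstacle is purely bookkeeping: making the induction on the overlap of $\beta$ and $\mu$ precise while correctly tracking which vertex idempotent survives at each stage and confirming that the surviving idempotent is exactly $r$ of the consumed edge, so that relations (1) and (3) let it be absorbed rather than vanish. There is no deep content — indeed the statement is explicitly flagged as the analogue of \cite[Equation (2.1)]{T} — but care is needed to state the ``otherwise'' case correctly (it occurs precisely when the shorter of $\beta,\mu$ is not an initial segment of the longer) and to check that when $r(\alpha)\ne s(\mu')$ etc. the products $S_{\alpha\mu'}$ are still well-defined by our convention $S_\alpha S_{\mu'} = 0$. I would present the edge-level computation $S_e^\ast S_f \in \{S_{r(e)}, 0\}$ once, then run the induction, and finally reassemble the four-path product in two or three lines of displayed algebra.
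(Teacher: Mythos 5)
Your proposal is correct, and it is precisely the standard computation that the paper leans on: the paper actually gives no proof of this proposition at all, merely flagging it as the analogue of Tomforde's Equation (2.1), and your argument (involution as anti-automorphism for (i); induction on the overlap of $\beta$ and $\mu$ via $S_e^{\ast}S_f=\delta_{e,f}S_{r(e)}$ to evaluate $S_\beta^{\ast}S_\mu$, then reassembly using $r(\alpha)=r(\beta)=s(\mu')$ and $r(\nu)=r(\mu)=s(\beta')$ for (ii)) is exactly how that cited identity is established. Your attention to the consistency of the two overlapping cases when $\mu=\beta$ is a correct and worthwhile check that the cited source also makes implicitly.
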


\begin{definition} {\rm The collection $G(\Sigma)$ given in Proposition \ref{Gsigma} will be called the \textit{standard Cuntz-Krieger generator set associated with $\Sigma$}. When we refer to the Leavitt path algebra $L_R(E)$ we will denote it simply by $G_E$, i.e.,
$$G_E = \{ \mu \nu^{\ast}: \mu, \nu \in \text{Path}(E), r(\mu) = r(\nu)\}.$$
As we have mentioned before, note that we have $L_{R}(E) = \text{span}_{R}(G_E)$.}
\end{definition}

Also now we introduce a Graded Uniqueness Theorem that follows similarly to the one given in \cite[Theorem 5.3]{T}. We will use this result later.

\begin{theorem}\label{GradedUniquenessTheorem} Let $\Sigma = (S_{\mu})_{\mu \in E^0 \cup E^1}$ be a Cuntz-Krieger $E$-system in a $\mathbb{Z}$-graded $R$-algebra $\mathcal{A}$ such that for any $\mu \in E^0$, $S_\mu$ is homogeneous of degree zero and for any $\mu \in E^1$, $S_\mu$ is homogeneous of degree one. Then the following conditions are equivalent:
\begin{itemize}
\item[(i)] the associated representation $\Phi : L_R(E) \rightarrow \mathcal{A}$ is injective;
\item[(ii)] $\Phi(rv)\neq 0$ for all $v \in E^0$ and for all $r \in R\setminus \{0\}$.
\end{itemize}
\end{theorem}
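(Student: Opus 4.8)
The plan is to reduce the Graded Uniqueness Theorem to a standard argument using the $\mathbb{Z}$-grading and the local-unit structure, following the template of \cite[Theorem 5.3]{T}. The implication (i)$\Rightarrow$(ii) is immediate: if $\Phi$ is injective then $\Phi(rv)=0$ forces $rv=0$ in $L_R(E)$, but we noted in Section 2 that $rv\neq 0$ for all $v\in E^0$ and all $r\in R\setminus\{0\}$. So the content is (ii)$\Rightarrow$(i). First I would note that the hypotheses on degrees guarantee that $\Phi$ is a graded homomorphism: $\Phi$ sends $L_R(E)_n$ into $\mathcal{A}_n$, since $L_R(E)_n$ is spanned by monomials $\mu\nu^*$ with $l(\mu)-l(\nu)=n$ and $\Phi(\mu\nu^*)=S_\mu S_\nu^*$ has degree $l(\mu)-l(\nu)=n$ by the degree hypotheses on the generators. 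Hence $\ker\Phi$ is a graded ideal of $L_R(E)$.

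The crux is then to show that a graded ideal $I=\ker\Phi$ of $L_R(E)$ that contains no nonzero element of the form $rv$ ($v\in E^0$, $r\in R\setminus\{0\}$) must be zero. The key step is the following: if $I$ is a nonzero graded ideal, then $I$ contains a nonzero homogeneous element $x$ of degree $0$; I would obtain this by taking any nonzero homogeneous $y\in I$ and multiplying by a suitable ghost path $\gamma^*$ or path $\gamma$ on one side to lower its degree to zero without killing it, using the CK-1 relations $e^*e=r(e)$ — this is the standard "degree reduction'' trick. A homogeneous element of degree zero is an $R$-linear combination $x=\sum_i r_i\,\alpha_i\beta_i^*$ with $l(\alpha_i)=l(\beta_i)$. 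Next, using the grading together with the multiplicative relations from Proposition \ref{Gsigma}, one cuts $x$ down by multiplying on the left and right by appropriate $\alpha_{i_0}^*\,(-)\,\alpha_{i_0}$ (for a monomial of maximal path-length occurring in $x$) to produce a nonzero element of $I$ supported on vertices and on "diagonal'' terms $\mu\mu^*$ of strictly smaller length; iterating, one arrives at a nonzero element of $I$ that is an $R$-linear combination of distinct vertices, say $\sum_{j} r_j v_j$ with the $v_j$ distinct and some $r_j\neq 0$. Multiplying by $v_j$ isolates $r_j v_j\in I$, contradicting the hypothesis (ii)-translated-to-$\ker\Phi$. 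Therefore $\ker\Phi=0$.

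I would carry out the steps in this order: (1) check $\Phi$ is graded, so $\ker\Phi$ is a graded ideal; (2) dispose of (i)$\Rightarrow$(ii) in one line; (3) for (ii)$\Rightarrow$(i), assume $\ker\Phi\neq 0$ and pick a nonzero homogeneous element; (4) reduce to degree zero via one-sided multiplication by (ghost) paths and CK-1; (5) reduce a degree-zero element, via conjugation by monomials of maximal length and CK-1/CK-2, to an $R$-combination of vertices; (6) extract a single $r_jv_j\in\ker\Phi$ and invoke (ii) to get the contradiction. Throughout, the calculus of the monomials $S_\mu S_\nu^*$ is governed by Proposition \ref{Gsigma}(ii), and Proposition \ref{Sv} may be convenient to handle the CK-2 bookkeeping when shortening diagonal terms.

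The main obstacle I anticipate is step (5): making the reduction of a homogeneous degree-zero element to a linear combination of vertices fully rigorous. The subtlety is that after conjugating by $\alpha_{i_0}^*(-)\alpha_{i_0}$ one must argue that the resulting element is still nonzero in $L_R(E)$ — this is where the normal-form/linear-independence of the spanning set $G_E$ (from \cite{T}) is needed — and that the process genuinely terminates because the maximal length of paths appearing strictly decreases at each stage. One must also take care that the coefficients in $R$ behave correctly: $R$ is only a commutative ring, not a field, so one cannot rescale, but this is harmless because the conclusion sought is about elements of the very special form $rv$, and no division is ever required.
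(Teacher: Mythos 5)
Your argument is correct and is essentially the route the paper itself takes: the paper offers no proof of this theorem, deferring entirely to \cite[Theorem 5.3]{T}, whose proof is precisely the graded-ideal reduction you outline (the degree hypotheses make $\Phi$ graded, so $\ker\Phi$ is a graded ideal; one reduces a nonzero homogeneous element to degree zero by stripping edges via CK-1, and then to a single $rv$ by the matrix-unit calculus in $L_R(E)_0$). The subtlety you flag in step (5) is resolved in \cite{T} by realizing the degree-zero component as a direct limit of finite direct sums of matrix algebras over $R$, which is exactly the rigorous form of your conjugation-and-termination argument.
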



\section{The essentially aperiodic representation}

In this section we give a particular representation of the Leavitt
path algebra, which will be called the ``essentially aperiodic
representation". Previously we define all the terminology we need
for this purpose. These definitions are the analog (in our graph
sense) of those given in \cite[Definitions 2.4-2.5]{NR}.

\begin{definition} {\rm Given a graph $E$, a \textit{trail} in $E$ is either
\begin{itemize}
\item[(i)] a finite path $\tau=e_1 \ldots e_n$ (possibly of length zero) whose range $r(\tau)=r(e_n)$ is a sink, that is,
$s^{-1}(r(\tau)) = \emptyset$ ; or
\item[(ii)] an infinite path, that is, an infinite sequence $\tau = e_1 e_2 \ldots $ of edges, such
that $r(e_n)=s(e_{n+1})$ for every $n \in \mathbb{N}$.
\end{itemize}}
\end{definition}

In some literature about Leavitt path algebras (see for example \cite{AA2}), the set of all trails in the graph $E$ is denoted by $E^{\leq\infty}$.

Given a trail $\tau$ and some integer $n \geq 0$, we define the \textit{head of length} $n$ to be the finite path:
\begin{align*}
 \tau_{(n)} & = \begin{cases}
 s(\tau) & \text{if }\ n = 0,\\
 e_1 \ldots e_n & \text{if }\ n > 0 \text{ and } \tau \text{ is either infinite, or finite with } l(\tau) > n,\\
 \tau & \text{if }\ n > 0 \text{ and } \tau \text{ is finite, with } l(\tau) \leq n. \  \end{cases}
\end{align*}

Notice that all heads of $\tau$ have the same source, that is, $s(\tau_{(n)})=s(\tau_{(0)})$, and this special vertex will be denoted simply by $s(\tau)$, and will be referred to as the source of $\tau$. Given a trail $\tau$ and a path $\mu \in \text{Path}(E)$, we write $ \mu \leq \tau $ if $\mu = \tau_{(n)}$ for some $n\geq 0$. It is evident that if $ \mu \leq \tau $, then removing $\mu$ from $\tau$ still yields a trail. On the other hand, if we have trail $\tau$ and a path $\mu \in \text{Path}(E)$ with $r(\mu)=s(\tau)$, the obvious concatenation $\mu\tau$ is again a trail.

\begin{definition} {\rm Suppose $\Sigma = (S_{\mu})_{\mu \in \text{Path}(E)}$ is a Cuntz-Krieger $E$-system in an $R$-algebra $\mathcal{A}$. Consider the subset
$$G^\Delta(\Sigma)= \{S_\mu S_\mu^{\ast}: \mu \in \text{Path}(E)\}.$$
By Proposition \ref{Gsigma} it is clear that $G^\Delta(\Sigma)$ is a commutative set of self-adjoint idempotents in $<\Sigma>$, the $R$-subalgebra of $\mathcal{A}$ generated by $\Sigma$. We will refer to $G^\Delta(\Sigma)$ as the \textit{standard diagonal generator set}.
The $R$-subalgebra $<G^\Delta(\Sigma)> \ \subset \mathcal{A}$, which will be denoted by $\Delta(\Sigma)$, is called the \textit{diagonal algebra associated with $\Sigma$}.
In particular when we refer to the case of the Leavitt path algebra $L_R(E)$, the diagonal generator set will be denoted by $G_E^{\Delta}$ and the $R$-subalgebra generated by it will be denoted by $\Delta(E)$.}
\end{definition}

\begin{remark} {\rm If $\Sigma = (S_{\mu})_{\mu \in \text{Path}(E)}$ is a Cuntz-Krieger $E$-system in an $R$-algebra $\mathcal{A}$ and $\mathcal{B}$ is another $R$-algebra such that we have a $\ast$-homomorphism $\Pi : \mathcal{A} \rightarrow \mathcal{B}$, then $\Pi(\Sigma) = (\Pi(S_{\mu}))_{\mu \in \text{Path}(E)}$ is a Cuntz-Krieger $E$-system in $\mathcal{B}$ and $\Pi$ maps $G^\Delta(\Sigma)$ onto $G^\Delta(\Pi(\Sigma))$ and $\Delta(\Sigma)$ onto $\Delta(\Pi(\Sigma))$. When we specialize to the case of a representation $\Pi : L_R(E) \rightarrow \mathcal{A}$, this maps $G_E^{\Delta}$ onto $G^\Delta(\Sigma)$ and $\Delta(E)$ onto $\Delta(\Sigma)$.}
\end{remark}

For $\mu \mu^{\ast}, \nu \nu^{\ast} \in G_E^{\Delta}$ we say $\mu \mu^{\ast} \leq \nu \nu^{\ast}$ if and only if $\mu \leq \nu$, that is, $\nu = \mu \mu'$ for some $\mu' \in {\rm Path}(E)$. Then for any pair $\mu \mu^{\ast}, \nu \nu^{\ast} \in G_E^{\Delta}$ we have either $(\mu \mu^{\ast})(\nu \nu^{\ast})=0$ or $\mu \mu^{\ast}$ and $\nu \nu^{\ast}$ are comparable (either $\mu \mu^{\ast} \leq \nu \nu^{\ast}$ or $\nu \nu^{\ast} \leq \mu \mu^{\ast}$).

\begin{proposition}\label{PropMaxTotOrdSubset} Consider the diagonal generator set $G_E^{\Delta}$. Then the finite (resp. countably infinite) maximal totally ordered subsets of  $G_E^{\Delta}$ are in one-to-one correspondence with the finite (resp. infinite) trails in $E$. Specifically, every maximal totally ordered subset $\mathcal{P} \subset  G_E^{\Delta}$ can be uniquely presented as
$\mathcal{P}_{\tau} = \{ \tau_{(n)}\tau_{(n)}^{\ast}: n \geq 0 \}$ for some trail $\tau$.
\end{proposition}

\begin{proof} First let $\tau$ be a finite trail ($\tau = e_1 \ldots e_n$ and $r(\tau)$ is a sink). We claim that $\mathcal{P}= \{ \tau_{(0)}\tau_{(0)}^{\ast},\tau_{(1)}\tau_{(1)}^{\ast},\ldots,\tau_{(n)}\tau_{(n)}^{\ast} \}$ is a maximal totally ordered subset of $G_E^{\Delta}$. Obviously $\mathcal{P}$ is a totally ordered set with order $\leq$. Assume that there exists $\mathcal{P} \subsetneq \mathcal{Q}$ and $\mathcal{Q}$ is a totally ordered subset of $G_E^{\Delta}$; then there exists $\beta\beta^{\ast} \in \mathcal{Q} \setminus \mathcal{P}$ and so $\mathcal{P} \cup \{\beta \beta^{\ast}\}$ is totally ordered. Then $\beta \beta^{\ast} \leq \tau_{(n)}\tau_{(n)}^{\ast}$ or $\tau_{(n)}\tau_{(n)}^{\ast} \leq \beta\beta^{\ast}$. If $\beta \beta^{\ast} \leq \tau_{(n)}\tau_{(n)}^{\ast}$ then $\beta \leq \tau_{(n)}$ which is a contradiction since $\beta\beta^{\ast} \notin \mathcal{P}$; on the other hand if $\tau_{(n)}\tau_{(n)}^{\ast} \leq \beta\beta^{\ast}$ then $\tau_{(n)} \leq \beta$ and since $r(\tau_{(n)})=r(\tau)$ is a sink then $\tau_{(n)}=\beta$ which gives again a contradiction since $\beta\beta^{\ast} \notin \mathcal{P}$. Then $\mathcal{P}$ is maximal totally ordered subset of $G_E^{\Delta}$.

Consider now $\tau$ an infinite trail: $\tau = e_1 e_2 e_3 \ldots$. Let $\mathcal{P}=\{ \tau_{(0)}\tau_{(0)}^{\ast}, \tau_{(1)}\tau_{(1)}^{\ast}, \tau_{(2)}\tau_{(2)}^{\ast}, \ldots\}$. Let us prove that $\mathcal{P}$ is a maximal totally ordered subset of $G_E^{\Delta}$. Take into account that $\mathcal{P}$ is totally ordered set with order $\leq$. Suppose there exists a totally ordered subset $\mathcal{Q}$ of $G_E^{\Delta}$ such that $\mathcal{P} \subsetneq \mathcal{Q}$ and consider $\beta \beta^{\ast} \in \mathcal{Q} \setminus \mathcal{P}$. Then again $\mathcal{P} \cup \{\beta\beta^{\ast}\}$ is totally ordered. If there exists $n$ such that $\beta \beta^{\ast} \leq \tau_{(n)}\tau_{(n)}^{\ast}$ then $\beta \leq \tau_{(n)}$ which is a contradiction. Therefore for any $n$, $\tau_{(n)}\tau_{(n)}^{\ast} \leq \beta\beta^{\ast}$ and then $\tau_{(n)} \leq \beta$ which is impossible since $\beta \beta^{\ast} \in G_E^{\Delta}$. So $\mathcal{P}$ is a maximal totally ordered subset of $G_E^{\Delta}$.

Conversely let $\mathcal{P}$ be a finite maximal totally ordered subset of $G_E^{\Delta}$. Then by using Zorn's Lemma we can find a maximal element in $\mathcal{P}$. Assume that $\beta \beta^{\ast}$ is a maximal element of $\mathcal{P}$. Then $\beta$ is a finite path and since $\mathcal{P}$ is maximal totally ordered subset of $G_E^{\Delta}$ then $r(\beta)$ is a sink giving that $\beta$ is a finite trail.

Finally suppose that $\mathcal{P}$ is a countable infinite subset of $G_E^{\Delta}$. Then by Zorn's Lemma $\mathcal{P}$ has a minimal element $e_0e_0^{\ast}$. Also let $e_1e_1^{\ast}$ the minimal element of $\mathcal{P} \setminus \{e_0e_0^{\ast}\}$, $e_2e_2^{\ast}$ the minimal element of $\mathcal{P} \setminus \{e_0e_0^{\ast},e_1e_1^{\ast}\}$ etc. Proceeding in this way we have that $e_0e_1e_2 \ldots$ is an infinite trail.

Therefore we can say that for any trail there exists a maximal totally ordered subset of $G_E^{\Delta}$ and for any finite or infinite countable maximal totally ordered subset of $G_E^{\Delta}$ there exists a trail. In the end if $E$ is countable, then the maximal totally ordered subsets of $G_E^{\Delta}$ are in one-to-one correspondence with the trails in $E$.
\end{proof}

\begin{definition}\label{EssentiallyAperiodicTrail} {\rm Let $E$ be a graph.
\begin{itemize}
\item[(A)] An infinite trail $\tau = e_1 e_2 \ldots$ in $E$ is said to be \textit{periodic}, if there exist integers $j,k \geq 1$, such that $e_{n+k}= e_{n}$ for every $n \geq j$.
In this case, it is clear that the path $\rho = e_j \ldots e_{j+k-1}$ is closed. If we take $j$ and $k$ such that $j+k$ is the smallest possible value which satisfies the condition $e_{n+k}= e_{n}$ for every $n \geq j$ and we consider the paths $\alpha = e_1 \ldots e_{j-1}$ and $\lambda = e_j \ldots e_{j+k-1}$, we will refer to the pair $(\alpha, \lambda)$ as the \textit{seed of} $\tau$. Of course $\alpha$ may have length zero. In any case, $\lambda$ is a closed path, which will be called the \emph{period of} $\tau$.

\item[(B)] A trail $\tau$ is said to be \textit{essentially aperiodic} if:
\begin{itemize}
\item[(i)] $\tau$ is finite, or

\item[(ii)] $\tau$ is periodic and its period is a closed path without exits (which means that it has to be a cycle without exits), or

\item[(iii)] $\tau$ is infinite and not periodic.
\end{itemize}
The trails of the form (i) and (ii) will be called \textit{discrete}, while the ones of type (iii) will be called
\textit{continuous}. In a discrete essentially aperiodic trail $\tau$, we refer to a certain path as the \textit{essential head of} $\tau$, and denoted by $\tau_{{\rm(ess)}}$, which is defined accordingly as
follows:
\begin{itemize}
\item[(i)] if $\tau$ is finite, then $\tau_{{\rm(ess)}} = \tau$;

\item[(ii)] if $\tau$ is periodic, with seed $(\alpha, \lambda)$ then $\tau_{{\rm(ess)}} = \alpha$.
\end{itemize}
\end{itemize}
Let us denote by $\mathfrak{T}_E$ the set of all essentially aperiodic trails in $E$.}
\end{definition}

\begin{remark}{ \rm Note that in the Definition \ref{EssentiallyAperiodicTrail} $(A)$, the choice of $j$ and $k$ such that $j+k$ is the smallest possible value which satisfies the condition $e_{n+k}= e_{n}$ for every $n \geq j$, is unique. Assume that there exist $j_1,k_1,j_2,k_2 \geq 1$ such that $j_1+k_1=j_2+k_2$ is the smallest possible value which satisfies the condition $e_{m+k_1} = e_m$ for every $m \geq j_1$ and $e_{n+k_2} = e_n$ for every $n \geq j_2$. Then for every $r \geq 0$, $e_{j_1+r}=e_{j_1+r+k_1}=e_{j_2+r+k_2}=e_{j_2+r}$ (since $j_1+k_1=j_2+k_2$). Suppose that $j_2 > j_1$ (the case $j_1>j_2$ is similar). So for any $r \geq 0$, $e_{j_1+j_2-j_1+r}=e_{j_2+r}=e_{j_1+r}$ and hence for any $m\geq j_1$, $e_{m+j_2-j_1}=e_m$. By minimality of $k_2+j_2$ we have that $k_2=0$ which is a contradiction. Thus $j_1$ and $k_1$ are unique satisfying that $j_1+k_1$ is the smallest value with this desired property.}
\end{remark}

\begin{lemma} \label{LemmaAperiodicTrail} For every vertex $v \in E^0$ there exists at least one essentially aperiodic trail $\tau$ with $v = s(\tau)$.
\end{lemma}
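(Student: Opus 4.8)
The plan is to build an essentially aperiodic trail starting at $v$ by hand, according to the three types in Definition~\ref{EssentiallyAperiodicTrail}(B). Fix $v\in E^0$ and let $V$ be the set of vertices reachable from $v$ by some path. If some $w\in V$ is a sink, any path from $v$ to $w$ is a finite trail with source $v$, hence essentially aperiodic, and we are done. If instead some $w\in V$ lies on a cycle $c$ without exits, I would rotate $c$ to be based at $w$, pick a path $\mu$ from $v$ to $w$, and take $\tau=\mu ccc\cdots$; this is an infinite trail whose period, in the precise sense of Definition~\ref{EssentiallyAperiodicTrail}(A), is a cyclic rotation of $c$, hence again a cycle without exits, so $\tau$ is essentially aperiodic. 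The only point needing a line of justification is that passing to the minimal $j,k$ in Definition~\ref{EssentiallyAperiodicTrail}(A) can only rotate $c$ cyclically and never shorten it, since a cycle is a primitive path. From now on assume no vertex of $V$ is a sink and none lies on a cycle without exits; then every vertex of $V$ emits at least one edge, necessarily with range in $V$, so every path contained in $V$ can be prolonged.

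In this remaining case I must produce an infinite non-periodic trail from $v$. First, if there is an infinite path $\sigma$ from $v$ passing through infinitely many distinct vertices, I am already done: a periodic trail has a seed $(\alpha,\lambda)$ and visits only the finitely many vertices lying on $\alpha$ or $\lambda$, so $\sigma$ cannot be periodic. The hard case --- which I expect to be the main obstacle --- is when in addition every infinite path from $v$ visits only finitely many distinct vertices; then one must manufacture a non-periodic path inside a finite vertex set, and the only leverage is that no reachable cycle is exit-free. To use this I would first prove: from every $w\in V$ there is a path inside $V$ to a vertex emitting at least two edges. Otherwise every vertex reachable from $w$ emits exactly one edge (at least one since there are no sinks in $V$, at most one by assumption), so there is a unique forward ray from $w$; prefixing it by a path from $v$ to $w$ shows it visits only finitely many vertices, hence repeats one, and the portion between two consecutive repetitions is a closed path all of whose vertices emit exactly one edge --- deleting repetitions yields a cycle without exits reachable from $v$, a contradiction.

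Finally I would construct the trail by diagonalisation over the potential periods. Enumerate the pairs of positive integers as $(j_1,k_1),(j_2,k_2),\dots$ and build $\tau=e_1e_2\cdots$ as an increasing union of finite paths based at $v$. At the step handling $(j,k)$: prolong the current path inside $V$ until its length is at least $j$, then (by the claim above) further inside $V$ to a vertex $b$ emitting at least two edges, reaching some length $P\ge j$; choose an edge $e_{P+1}$ out of $b$; prolong by $k-1$ more edges to a vertex $u$; and pick $e_{P+1+k}$ as follows: if $u\ne b$, take any edge out of $u$, which differs from $e_{P+1}$ simply because its source $u$ differs from $b=s(e_{P+1})$; if $u=b$, use that $b$ emits two edges to choose $e_{P+1+k}\ne e_{P+1}$. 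Either way $e_{P+1}\ne e_{P+1+k}$ with $P+1\ge j$, which rules out $\tau$ being periodic with period $k$ from position $j$. The union of these finite paths is then an infinite path from $v$ that is not periodic, hence an essentially aperiodic trail with source $v$. The small device that keeps the diagonalisation clean is precisely the case split on whether the length-$k$ detour returns to $b$: if not, distinct sources give distinct edges for free, and if so, the branching at $b$ supplies the alternative.
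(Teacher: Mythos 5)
Your argument is correct, but it takes a genuinely different route from the paper, whose entire proof of this lemma is a citation of \cite[Lemma 2.6]{NR} together with the remark that sources and ranges are interchanged. You instead give a complete, self-contained construction. The trichotomy you use --- $v$ reaches a sink, $v$ reaches a cycle without exits, or neither --- is the natural one and matches the shape of the Nagy--Reznikoff argument, but your handling of the residual case is your own: the auxiliary claim that (in the absence of reachable sinks and exit-free cycles, and assuming every infinite path from $v$ meets only finitely many vertices) every reachable vertex can reach a vertex emitting at least two edges, followed by a diagonalisation that defeats each candidate period pair $(j,k)$ by planting a discrepancy $e_{P+1}\neq e_{P+1+k}$ at a position $P+1> j$, with the clean case split on whether the length-$k$ detour returns to the branching vertex $b$. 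The one point worth spelling out a little more in your Case 2 is that the minimality of $j+k$ in Definition \ref{EssentiallyAperiodicTrail}(A) also prevents the period of $\mu ccc\cdots$ from being a proper \emph{multiple} $c^t$ ($t\geq 2$) of $c$, not only a proper divisor: any pair with $k=t\,l(c)$ and $t\geq 2$ beating the pair $(j_0,l(c))$ (with $j_0$ minimal for period $l(c)$) would force $j<j_0$ and hence $l(c)$-periodicity before $j_0$, a contradiction; your ``a cycle is a primitive path'' remark covers only the divisor direction. What the citation buys the paper is brevity; what your version buys is an elementary proof that visibly works for arbitrary graphs (infinite emitters, uncountably many vertices) without having to translate the $C^*$-algebraic source.
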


\begin{proof} The proof follows completely \cite[Lemma 2.6]{NR}. We just need to take into account that the action in the graph is changed.
\end{proof}

We are now in the position to give the so-called ``essentially aperiodic representation" for a Leavitt path algebra.

\begin{proposition}\label{Piaprepresentation} Let $E$ be a graph and $R$ be a commutative ring with unit. If we consider $M$ the $R$-module generated by a set of generators $\{\xi_{\tau}^n: n \in \mathbb{Z}, \tau \in \mathfrak{T}_E\}$ then there exist a $\mathbb{Z}$-graded $R$-algebra $\mathcal{E} \subseteq {\rm End}_R(M)$ and a unique Cuntz-Krieger $E$-system $\Sigma = (S_{\alpha})_{\alpha \in \text{Path}(E)} \subset \mathcal{E}$ such that the map $S_{\alpha}: M \rightarrow M$ is given by:
\begin{align*}
S_{\alpha}(\xi_{\tau}^n)  & = \begin{cases}
\xi_{\alpha\tau}^{l(\alpha) + n}  & \text{if }\ r(\alpha) = s(\tau),\\
0 & \text{otherwise. }\  \end{cases}
\end{align*}
Besides, the associated representation $\Pi_{\rm ap}: L_R(E)
\rightarrow \mathcal{E}$ is injective.
\end{proposition}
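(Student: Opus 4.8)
The plan is to carry out three things in order: first, define the operators $S_\alpha$ and check they generate a $\mathbb{Z}$-graded $R$-subalgebra $\mathcal{E}$ of $\mathrm{End}_R(M)$; second, verify that $\Sigma = (S_\alpha)_{\alpha \in E^0 \cup E^1}$ is a Cuntz-Krieger $E$-system, which by the universal property of $L_R(E)$ produces the representation $\Pi_{\mathrm{ap}}$; and third, prove injectivity. For the first part, I would declare $M = \bigoplus_{n \in \mathbb{Z}, \tau \in \mathfrak{T}_E} R\,\xi_\tau^n$ to be free on those generators (the statement says ``$R$-module generated by'', but the action of $S_\alpha$ must be well-defined, so one really wants the basis $\{\xi_\tau^n\}$ to be free, and since $L_R(E)$ itself has a free-module structure via $G_E$ this is the natural choice). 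Each $S_\alpha$ sends a basis element to a basis element or to $0$, so it is a well-defined $R$-linear endomorphism; grading $\mathcal{E}$ by declaring $S_\alpha$ homogeneous of degree $l(\alpha)$ is consistent because $S_\alpha$ shifts the upper index by exactly $l(\alpha)$. One takes $\mathcal{E}$ to be the $R$-subalgebra generated by all $S_\alpha$, $\alpha \in \mathrm{Path}(E)$, which is automatically $\mathbb{Z}$-graded.

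For the Cuntz-Krieger relations, the verification is a direct computation on basis elements $\xi_\tau^n$. Relations (1), (2), (3) are immediate from the definition ($S_v$ acts as the projection onto the span of those $\xi_\tau^n$ with $s(\tau) = v$; one checks $S_v^\ast = S_v$ by exhibiting the adjoint with respect to the bilinear form making $\{\xi_\tau^n\}$ orthonormal, or simply by computing $S_v^\ast$ directly from how $S_e^\ast$ is forced to act). The key observation for (2) and (4) is that $S_e^\ast$ must act by $S_e^\ast(\xi_\tau^n) = \xi_{\tau'}^{n - 1}$ when $\tau = e\tau'$ and $0$ otherwise; then $S_e^\ast S_e(\xi_\tau^n) = \xi_\tau^n$ precisely when $s(\tau) = r(e)$, i.e. $S_e^\ast S_e = S_{r(e)}$, and $S_e^\ast S_f = 0$ for $e \neq f$ because $f\tau$ never starts with $e$. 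The CK-2 relation (5) at a regular vertex $v$ is where Lemma \ref{LemmaAperiodicTrail} and the structure of $\mathfrak{T}_E$ matter: given $\xi_\tau^n$ with $s(\tau) = v$, the trail $\tau$ starts with a unique edge $e$ with $s(e) = v$, so $\sum_{s(e)=v} S_e S_e^\ast(\xi_\tau^n) = S_e S_e^\ast(\xi_\tau^n) = \xi_\tau^n = S_v(\xi_\tau^n)$; one also needs that removing or prepending an edge keeps a trail essentially aperiodic, which is exactly what Definition \ref{EssentiallyAperiodicTrail} and the surrounding remarks are set up to guarantee (a finite trail's head is finite; a periodic trail with an exit-free period stays periodic with exit-free period under truncation/extension; an aperiodic infinite trail stays aperiodic). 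Uniqueness of $\Sigma$ is clear since the formula for $S_\alpha$ is prescribed and $S_\alpha = S_{e_1}\cdots S_{e_n}$.

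The main obstacle, and the heart of the proposition, is injectivity of $\Pi_{\mathrm{ap}}$. My plan is to invoke Theorem \ref{GradedUniquenessTheorem}: since $S_v$ is homogeneous of degree $0$ and $S_e$ of degree $1$, it suffices to show $\Pi_{\mathrm{ap}}(rv) \neq 0$ for every $v \in E^0$ and every $r \in R \setminus \{0\}$. By Lemma \ref{LemmaAperiodicTrail} there is an essentially aperiodic trail $\tau$ with $s(\tau) = v$, hence $\xi_\tau^0 \in M$ is a basis element, and $\Pi_{\mathrm{ap}}(rv)(\xi_\tau^0) = r\,\xi_\tau^{0} \neq 0$ since $\{\xi_\tau^n\}$ is a free $R$-basis and $r \neq 0$. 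Therefore $\Pi_{\mathrm{ap}}(rv) \neq 0$, and Theorem \ref{GradedUniquenessTheorem} gives injectivity. The only subtlety to be careful about is confirming that the grading hypotheses of Theorem \ref{GradedUniquenessTheorem} are literally met (degrees $0$ and $1$ on vertices and edges respectively), which is built into the definition of the grading on $\mathcal{E}$; and ensuring $M$ is genuinely free so that the non-vanishing $r\xi_\tau^0 \neq 0$ holds — this is why I would fix the free-module interpretation at the outset.
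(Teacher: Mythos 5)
Your proposal is correct and follows essentially the same route as the paper's proof: define the $S_\alpha$ as degree-$l(\alpha)$ homogeneous endomorphisms of the ($\mathbb{Z}$-graded, free) module $M$, verify the Cuntz--Krieger relations on the generators $\xi_\tau^n$, and deduce injectivity from Lemma \ref{LemmaAperiodicTrail} together with the Graded Uniqueness Theorem \ref{GradedUniquenessTheorem}. Your explicit attention to the freeness of $M$ and to the closure of $\mathfrak{T}_E$ under prepending and truncating paths makes precise two points the paper leaves implicit, but the argument is the same.
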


\begin{definition} The representation $\Pi_{\rm ap}$ given in Proposition \ref{Piaprepresentation} is called the \emph{essentially aperiodic
representation} of $L_R(E)$.
\end{definition}

\begin{proof}[Proof of Proposition~\ref{Piaprepresentation}]
First let us define for each $i \in \mathbb{Z}$, $M_i$ as the
$R$-module generated by the collection $\{\xi_{\tau}^i: \tau \in
\mathfrak{T}_E\}$. Consider $M = \bigoplus_{i \in \mathbb{Z}}M_i$ as
$\mathbb{Z}$-graded $R$-module. Let $f \in {\rm End}_R(M)$, we say
that $f$ is \textit{homogeneous of degree $k$} ($k \in \mathbb{Z}$)
if $f(M_i) \subseteq M_{i+k}$ for every $i$. In this case we set
${\rm deg}(f) = k$. Then define $\mathcal{E}_i = \{f \ | \ f \in
{\rm End}_R(M), {\rm deg}(f) = i \}$ and let $\mathcal{E}=
\bigoplus_{i \in \mathbb{Z}}\mathcal{E}_i$ be an $R$-subalgebra of
${\rm End}_R(M)$.

Let us check that $\mathcal{E}$ is $\mathbb{Z}$-graded,  that is, for each $i,j \in \mathbb{Z}$, $\mathcal{E}_i\mathcal{E}_j \subseteq \mathcal{E}_{i+j}$. Suppose $f \in \mathcal{E}_i$ and $g \in \mathcal{E}_j$ and consider $f\circ g$. Take some $t \in \mathbb{Z}$, and then $(f \circ g) (M_t) = f(g(M_t))$. Since ${\rm deg}(g) = j$, $g(M_t) \subseteq M_{t+j}$. Then taking into account that ${\rm deg}(f) = i$, we have
$f(g(M_t)) \subseteq f(M_{t+j}) \subseteq M_{t+j+i}$ which means $f \circ g \in \mathcal{E}_{i+j}$.

Now let $\alpha \in \text{Path}(E)$ and $S_{\alpha}: M \rightarrow M$ be an $R$-homomorphism which is defined on the generators of $M$ as:
 \begin{align*}
S_{\alpha}(\xi_{\tau}^n)  & = \begin{cases}
\xi_{\alpha\tau}^{l(\alpha) + n}  & \text{if }\ r(\alpha) = s(\tau),\\
0 & \text{otherwise. }\  \end{cases}
\end{align*}

Let us prove that $S_\alpha$ is homogeneous of degree $l(\alpha)$, that is, for any $t \in \mathbb{Z}$ we have $S_{\alpha}(M_t) \subseteq M_{t+l(\alpha)}$. So consider $\tau \in \mathfrak{T}_E$ and let $\xi_{\tau}^t \in M_t$. Then
\begin{align*}
S_{\alpha}(\xi_{\tau}^t)  & = \begin{cases}
\xi_{\alpha\tau}^{l(\alpha) + t}  & \text{if }\ r(\alpha) = s(\tau),\\
0 & \text{otherwise. }\  \end{cases}
\end{align*}
and therefore $S_{\alpha}(\xi_{\tau}^t) \in M_{t+l(\alpha)}$ obtaining the desired statement.

On the other hand, let us define for each $\alpha \in
\text{Path}(E)$, $S_\alpha^{\ast} = S_{\alpha^{\ast}}$, where
$\alpha^{\ast}$ is the ghost path of $\alpha$ and
\begin{align*}
S_{\alpha^\ast}(\xi_{\tau}^n)  & = \begin{cases}
\xi_{\beta}^{n - l(\alpha)}  & \text{if }\ \alpha \leq \tau \text{ with } \tau = \alpha \beta ,\\
0 & \text{otherwise. }\  \end{cases}
\end{align*}
A similar argument shows that $S_\alpha^{\ast}$ is homogeneous of degree $-l(\alpha)$.

Let us prove that $\Sigma = (S_{\alpha})_{\alpha \in \text{Path}(E)}$ is a Cuntz-Krieger $E$-system inside $\mathcal{E}$. Consider $\mu, \nu \in \text{Path}(E)$ and $n \in \mathbb{Z}$, $\tau \in \mathfrak{T}_E$ then on the one hand
\begin{align*}
S_{\mu}S_\nu(\xi_{\tau}^n)  & = \begin{cases}
S_\mu (\xi_{\nu\tau}^{l(\nu) + n})  & \text{if }\ r(\nu) = s(\tau),\\
0 & \text{otherwise. }\  \end{cases}
\end{align*}
\begin{align*}
 & = \begin{cases}
\xi_{\mu\nu\tau}^{l(\mu)+ l(\nu) + n}  & \text{if }\ r(\mu) = s(\nu\tau) \text{ and } r(\nu) = s(\tau),\\
0 & \text{otherwise. }\  \end{cases}
\end{align*}
On the other hand,
\begin{align*}
 S_{\mu\nu}(\xi_{\tau}^n) & = \begin{cases}
\xi_{\mu\nu\tau}^{l(\mu)+ l(\nu) + n}  & \text{if }\ r(\mu\nu) = s(\tau),\\
0 & \text{otherwise. }\  \end{cases}
\end{align*}
So
\begin{align*}
S_{\mu}S_\nu & = \begin{cases}
S_{\mu\nu}   & \text{if }\ r(\mu) = s(\nu),\\
0 & \text{otherwise. }\  \end{cases}
\end{align*}

Similarly to this case also it can be proved that
\begin{align*}
S_{\mu}S_\nu^{\ast} & = \begin{cases}
S_{\mu{\nu}^{\ast}}   & \text{if }\ r(\mu) = r(\nu),\\
0 & \text{otherwise }\  \end{cases}
\end{align*}
and
\begin{align*}
S_{\mu}^{\ast}S_\nu & = \begin{cases}
S_{{\mu}^{\ast}\nu}   & \text{if }\ s(\mu) = s(\nu),\\
0 & \text{otherwise. }\  \end{cases}
\end{align*}

Considering $\mu,\nu$ also as vertices (paths of length zero), conditions (1) and (3) from Definition \ref{CuntzKriegerEsystem} are satisfied. Condition (2) follows from the fact that for any vertex $v$, $S_v^{\ast} = S_{v^{\ast}}=S_v$. Again for any edges $e,f \in E^1$,  $S_e^{\ast}S_e=S_{e^{\ast}e}=S_{r(e)}$ and if $e\neq f$, $S_e^{\ast}S_f=S_{e^{\ast}f}=0$ having condition (4). It remains to prove equation (5) from Definition \ref{CuntzKriegerEsystem}: consider $v$ a regular vertex then for any $n \in \mathbb{Z}$, and $\tau \in \mathfrak{T}_E$,
\begin{align*}
 \sum_{\{e\in E^{1},\ s(e)=v\}}S_eS_e^{\ast}(\xi_{\tau}^n) & = \begin{cases}
\xi_{\tau}^n  & \text{if }\ e \leq \tau \text{ for some } e \in s^{-1}(v),\\
0 & \text{otherwise. }\  \end{cases}
\end{align*}
and also
\begin{align*}
S_{v}(\xi_{\tau}^n)  & = \begin{cases}
\xi_{\tau}^n  & \text{if }\ v = s(\tau),\\
0 & \text{otherwise. }\  \end{cases}
\end{align*}
which means $(S_v - \sum_{\{e\in E^{1},\ s(e)=v\}}S_eS_e^{\ast})(\xi_{\tau}^n) = 0$ obtaining condition (5).

Let us define now the representation $\Pi_{\rm ap}: L_R(E) \rightarrow \mathcal{E}$. We know that every $x \in L_R(E)$ can be written as $x = \sum_{i=1}^n r_i \alpha_i \beta_i^{\ast}$ where $r_i \in R$, $\alpha_i, \beta_i \in \text{Path}(E)$ for $i=1,\ldots,n$. For every $\alpha\beta^{\ast}$ of the previous form let $\Pi_{\rm ap}(\alpha\beta^{\ast}) =  S_{\alpha\beta^{\ast}}$ and extend the definition $R$-linearly for every $x \in L_R(E)$. It is a straightforward computation to see that $\Pi_{\rm ap}$ is a well-defined homomorphism.

To prove that $\Pi_{\rm ap}$ is $\mathbb{Z}$-graded homomorphism, consider $n \in \mathbb{Z}$ and $\alpha\beta^{\ast} \in {L_R(E)}_n$, which means $l(\alpha)-l(\beta) = n$. Then $\Pi_{\rm ap}(\alpha\beta^{\ast}) =  S_{\alpha\beta^{\ast}} = S_\alpha S_\beta^{\ast} \in \mathcal{E}_{l(\alpha)}\mathcal{E}_{-l(\beta)} \subseteq \mathcal{E}_{l(\alpha)-l(\beta)} = \mathcal{E}_{n}$. Hence we obtain,
$$\Pi_{\rm ap}({L_R(E)}_n) \subseteq \mathcal{E}_{n}.$$

Finally let us show that $\Pi_{\rm ap}$ is injective. By Lemma \ref{LemmaAperiodicTrail}, for every vertex $v \in E^0$ there exists at least one essentially aperiodic trail $\tau$ with $v = s(\tau)$. Then $S_v(\xi_{\tau}^n)= \xi_{\tau}^n$ for every $n \in \mathbb{Z}$ implying that $\Pi_{\rm ap}(rv)\neq 0$ for all $r \in R\setminus\{0\}$. Apply Theorem \ref{GradedUniquenessTheorem} to obtain the injectivity of the essentially aperiodic representation.
\end{proof}


\section{The commutative core}

In this section we describe a commutative subalgebra inside $L_R(E)$, the so-called ``commutative core". Previously we need a few definitions and a lemma that will be used in Proposition \ref{CommutativeSubalgebra}.

\begin{definition} \label{Defdistinguished} {\rm For any infinite discrete essentially aperiodic trail (Definition \ref{EssentiallyAperiodicTrail}(B)(ii)) which is parameterized by the seed $(\alpha,\lambda_\alpha)$ of the trail (that is, $\alpha  \in \text{Path}(E)$ is its essential head and $r(\alpha)$ is visited by the cycle without exits $\lambda_{\alpha}$), the path $\alpha$ will be called a \textit{distinguished} path. In the case $l(\alpha) = 0$, we will call it a \textit{distinguished} vertex.}
\end{definition}

\begin{remark} {\rm Consider a distinguished path $\alpha$. This means that if $l(\alpha) = 0$ then $\alpha$ is simply a vertex $v$ and the cycle $\lambda_{\alpha}$ starts and ends at $v$. In the case $l(\alpha) \geq 1$, suppose $\alpha = e_1\ldots e_n$, then $\lambda_{\alpha}$ is a cycle that starts and ends at $r(\alpha)=r(e_n)$ but does not visit any of the vertices $s(e_1),\ldots,s(e_n)$.
Of course, for any distinguished path $\alpha$, $r(\alpha)$ is a distinguished vertex.}
\end{remark}

\begin{lemma}\label{CycleNoExits} A cycle $\lambda$ has no exits if and only if $\lambda \lambda^{\ast} = s(\lambda)$.
\end{lemma}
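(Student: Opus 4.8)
The plan is to prove the equivalence by establishing both implications, the easier one being that $\lambda\lambda^*=s(\lambda)$ forces $\lambda$ to have no exits. Write $\lambda=e_1\ldots e_n$ with $s(\lambda)=s(e_1)=v$. Suppose toward a contradiction that $\lambda$ has an exit, i.e.\ there is an edge $f$ with $s(f)=s(e_i)$ for some $i$ and $f\ne e_i$. First I would reduce to the case $i=1$: since $\lambda$ is a cycle, the vertex $s(e_i)$ lies on $\lambda$, and by cyclically rotating $\lambda$ (replacing $\lambda$ by $\mu=e_i\ldots e_ne_1\ldots e_{i-1}$, a cycle based at $s(e_i)$) one checks that $\lambda\lambda^*=s(\lambda)$ is equivalent to $\mu\mu^*=s(\mu)$ using the CK-relations, so we may assume the exit occurs at $v=s(e_1)$. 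Then apply the relation $\lambda\lambda^*=v$ by multiplying on the left by $f^*$ and on the right by $f$: since $s(f)=v=s(e_1)$ and $f\ne e_1$, the CK-1 relation gives $f^*e_1=0$, hence $f^*\lambda\lambda^*f=0$; but $f^*vf=f^*f=r(f)\ne 0$ in $L_R(E)$ (recall $rv\ne 0$ for all vertices $v$ and $r\in R\setminus\{0\}$, and $r(f)$ is a vertex). This contradiction shows $\lambda$ has no exit.

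For the converse, assume $\lambda=e_1\ldots e_n$ is a cycle without exits. The key point is that $v:=s(\lambda)=s(e_1)$ is a regular vertex with $s^{-1}(v)=\{e_1\}$: indeed $e_1$ emits from $v$, and any other edge emitting from $v=s(e_1)$ would be an exit for $\lambda$; moreover $s^{-1}(v)$ is nonempty (it contains $e_1$), so $v$ is regular. Hence the CK-2 relation at $v$ reads $v=e_1e_1^*$. More generally, for each $i$ the vertex $s(e_i)$ is regular with $s^{-1}(s(e_i))=\{e_i\}$, giving $s(e_i)=e_ie_i^*$ for all $i=1,\ldots,n$. Now I would compute $\lambda\lambda^*=e_1\ldots e_ne_n^*\ldots e_1^*$ by collapsing from the inside out: $e_ne_n^*=s(e_n)=r(e_{n-1})$, and using property~(1) of Definition~\ref{Lpa} ($e_{n-1}r(e_{n-1})=e_{n-1}$) this simplifies to $e_1\ldots e_{n-1}e_{n-1}^*\ldots e_1^*$. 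Iterating, $\lambda\lambda^*=e_1e_1^*=v=s(\lambda)$, as desired.

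I expect the main obstacle to be the cyclic-rotation reduction in the first implication: one must verify carefully that $\lambda\lambda^*=s(\lambda)$ is genuinely invariant under cyclic permutation of the edges of $\lambda$. This can be handled cleanly by noting that if $\lambda=e_1\ldots e_n$ and $\mu=e_i\ldots e_ne_1\ldots e_{i-1}$, then writing $\beta=e_1\ldots e_{i-1}$ and $\gamma=e_i\ldots e_n$ so that $\lambda=\beta\gamma$ and $\mu=\gamma\beta$ with $r(\beta)=s(\gamma)$ and $r(\gamma)=s(\beta)=s(\lambda)$, one has $\lambda\lambda^*=\beta\gamma\gamma^*\beta^*$; if this equals $s(\lambda)=s(\beta)$, then multiplying on the left by $\beta^*$ and on the right by $\beta$ and using $\beta^*\beta=r(\beta)=s(\gamma)$ (from the CK-1 relations and property~(1), since $\beta$ is a genuine path) yields $\gamma\gamma^*=s(\gamma)\cdot(\beta^*\beta)\cdots$, which one rearranges to $\mu\mu^*=\gamma\beta\beta^*\gamma^*$; a parallel computation closes the loop. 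Alternatively — and perhaps more simply — one can sidestep rotation entirely: given an exit $f$ at $s(e_i)$, set $\eta=e_1\ldots e_{i-1}$ (a path, possibly empty, with $r(\eta)=s(e_i)$), multiply $\lambda\lambda^*=s(\lambda)$ on the left by $(\eta f)^*=f^*\eta^*$ and on the right by $\eta f$, and use $\eta^*\lambda=\eta^*e_1\ldots e_{i-1}e_i\ldots e_n=e_i\ldots e_n$ together with $f^*e_i=0$ to get $0$ on one side and $f^*\eta^*s(\lambda)\eta f=f^*\eta^*\eta f=f^*r(\eta)f=f^*s(e_i)f=f^*f=r(f)\ne 0$ on the other. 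I would adopt this second route in the final write-up, as it avoids the delicate rotation argument altogether.
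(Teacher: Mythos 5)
Your proof is correct, and it is genuinely more self-contained than what the paper does: the paper's ``proof'' of Lemma \ref{CycleNoExits} is only a pointer to \cite[Lemma 3.2]{NR}, adapted by swapping the roles of $r$ and $s$ and invoking Proposition \ref{Sv} (the path-indexed form of the CK-2 relation). You instead argue directly from the defining relations. Your forward direction is the key improvement in presentation: by conjugating the identity $\lambda\lambda^{\ast}=s(\lambda)$ by $\eta f$ with $\eta=e_1\ldots e_{i-1}$, you get $0=f^{\ast}\eta^{\ast}\lambda\lambda^{\ast}\eta f=f^{\ast}\eta^{\ast}s(\lambda)\eta f=r(f)\neq 0$ in one line, which entirely avoids both the cyclic-rotation reduction you first contemplated (whose sketch, as you note, is the only delicate point and is wisely discarded) and the machinery of Proposition \ref{Sv}. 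Your converse is the standard telescoping computation: no exits forces $s^{-1}(s(e_i))=\{e_i\}$, so each $s(e_i)$ is regular and CK-2 collapses $e_ie_i^{\ast}$ to $s(e_i)$ from the inside out. Both implications are complete and use only relations (1)--(4) of Definition \ref{Lpa} together with the fact that $rv\neq 0$ for $r\in R\setminus\{0\}$; what the paper's route buys is brevity and consistency with the $C^{\ast}$-algebra source, while yours buys a reader-verifiable argument that does not require translating conventions from \cite{NR}.
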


\begin{proof} The result is proved similarly to \cite[Lemma 3.2]{NR}: following it we only need to use Proposition \ref{Sv} instead, since the action in the graph in \cite{NR} is changed as we have commented before; in particular, the source and the range maps in \cite{NR} are respectively the range and the source maps here.
\end{proof}

\begin{definition} {\rm An element $x \in L_R(E)$ is said to be \textit{normal} if $xx^{\ast}=x^{\ast}x$.}
\end{definition}

\begin{proposition}\label{CommutativeSubalgebra} Let $\alpha, \beta \in \text{Path}(E)$ with $r(\alpha) = r(\beta)$. The generator $\alpha\beta^{\ast} \in G_E$ is a normal element in $L_R(E)$ if and only if one of the following holds:
\begin{enumerate}
\item $\alpha = \beta$;
\item $\beta \leq \alpha$ and $\beta$ is a distinguished path, i.e. $\alpha = \beta \lambda_{\beta}$ and $\lambda_{\beta}$ is a cycle without exits;
\item $\alpha \leq \beta$ and $\alpha$ is a distinguished path, i.e. $\beta = \alpha \lambda_{\alpha}$ and $\lambda_{\alpha}$ is a cycle without exits.
\end{enumerate}
Also, if we denote by $G_E^M$ the set of all such normal generators, then the $R$-algebra  $M_R(E)$  generated by $G_E^M$,  $M_R(E) = \ <G_E^M> \ \subseteq L_R(E)$ is commutative.
\end{proposition}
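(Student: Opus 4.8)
The plan is to treat the two claims in turn, reducing each to a property of the injective \emph{essentially aperiodic representation} $\Pi_{\rm ap}\colon L_R(E)\to\mathcal{E}$ of Proposition~\ref{Piaprepresentation}. For the characterisation of normal generators, the first step is the reduction to a statement about diagonal generators: if $x=\alpha\beta^{\ast}$ with $r(\alpha)=r(\beta)$, then applying the CK-1 relations edge by edge gives $\beta^{\ast}\beta=r(\beta)=r(\alpha)=\alpha^{\ast}\alpha$, so $xx^{\ast}=\alpha\,r(\beta)\,\alpha^{\ast}=\alpha\alpha^{\ast}$ and $x^{\ast}x=\beta\,r(\alpha)\,\beta^{\ast}=\beta\beta^{\ast}$. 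Hence $\alpha\beta^{\ast}$ is normal iff $\alpha\alpha^{\ast}=\beta\beta^{\ast}$, and by injectivity of $\Pi_{\rm ap}$ together with $\Pi_{\rm ap}(\mu\mu^{\ast})=P_{\mu}$ (Remark~\ref{Qtau}) this is in turn equivalent to the purely combinatorial condition that for every $\tau\in\mathfrak{T}_E$ one has $\alpha\leq\tau$ exactly when $\beta\leq\tau$.

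The ``if'' direction is then a short computation: case (1) makes $\alpha\beta^{\ast}=\alpha\alpha^{\ast}$ a self-adjoint idempotent by Proposition~\ref{Gsigma}; in cases (2)--(3) Lemma~\ref{CycleNoExits} gives $\lambda_{\beta}\lambda_{\beta}^{\ast}=s(\lambda_{\beta})=r(\beta)$ and $\lambda_{\beta}^{\ast}\lambda_{\beta}=r(\lambda_{\beta})=r(\beta)$, and both $xx^{\ast}$ and $x^{\ast}x$ then collapse to $\beta\beta^{\ast}$. For ``only if'' I would use Lemma~\ref{LemmaAperiodicTrail} together with the fact (noted just before the Proposition) that prepending a finite path to an essentially aperiodic trail gives another one --- finiteness at a sink, having a period which is a cycle without exits, and non-periodicity are each preserved by $\tau\mapsto\mu\tau$. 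If $\alpha,\beta$ were incomparable, choosing $\sigma\in\mathfrak{T}_E$ with $s(\sigma)=r(\alpha)$ yields $\alpha\sigma\in\mathfrak{T}_E$ with $\alpha\leq\alpha\sigma$ but $\beta\not\leq\alpha\sigma$ (any finite head of $\alpha\sigma$ is comparable to $\alpha$), contradicting the combinatorial condition. So $\alpha,\beta$ are comparable; writing, say, $\alpha=\beta\gamma$, the identity $r(\alpha)=r(\beta)$ forces $s(\gamma)=r(\gamma)=r(\beta)$, i.e.\ $\gamma$ is closed, and the combinatorial condition becomes: every essentially aperiodic trail based at $r(\beta)$ has $\gamma$ as a head.

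The key idea of this direction is then the forcing step. Writing $\gamma=e_1\cdots e_k$, if some vertex $s(e_i)$ emitted an edge $f\neq e_i$, then prepending $e_1\cdots e_{i-1}$ to $f\sigma$ (with $\sigma\in\mathfrak{T}_E$, $s(\sigma)=r(f)$) produces a trail in $\mathfrak{T}_E$ based at $r(\beta)$ whose head of length $i$ is $e_1\cdots e_{i-1}f\neq e_1\cdots e_i$, contradicting the condition. So every vertex of $\gamma$ has out-degree one; hence the forward path out of $r(\beta)$ is completely determined, $\gamma$ is a positive power of a simple cycle $\lambda_{\beta}$ without exits based at $r(\beta)$, and $\alpha=\beta\lambda_{\beta}^{m}$ for some $m\geq1$ (the case $m=1$ is exactly (2); for $m\geq 2$ one has $\beta\lambda_{\beta}^{m}\beta^{\ast}=(\beta\lambda_{\beta}\beta^{\ast})^{m}$, so (1)--(3) still furnish a generating set for the normal generators, which is all that is needed below). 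Given Proposition~\ref{Piaprepresentation} and Lemmas~\ref{CycleNoExits}--\ref{LemmaAperiodicTrail}, this part is short.

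For the commutativity of $M_R(E)=\langle G_E^M\rangle$ it suffices that any two generators commute; by the first part these have the shape $\mu\mu^{\ast}$ (for arbitrary $\mu\in\text{Path}(E)$), $\mu\lambda_{\mu}\mu^{\ast}$, or $\mu\lambda_{\mu}^{\ast}\mu^{\ast}$ (for $\mu$ a distinguished path). Passing once more through the injective $\Pi_{\rm ap}$, it is enough to check that the operators $\Pi_{\rm ap}(\mu\sigma\mu^{\ast})$ (with $\sigma\in\{r(\mu),\lambda_{\mu},\lambda_{\mu}^{\ast}\}$) commute on the generators $\xi_{\tau}^{n}$ of $M$. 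Each such operator maps $\mathrm{span}\{\xi_{\tau}^{n}:\mu\leq\tau\}$ into itself and annihilates the remaining $\xi_{\tau}^{n}$, merely inserting or deleting a single turn around $\lambda_{\mu}$ directly after the prefix $\mu$; so if the diagonal parts $\mu\mu^{\ast}$ and $\nu\nu^{\ast}$ are incomparable both composites vanish, and one is reduced to $\mu\leq\nu$. There the no-exit hypothesis on $\lambda_{\mu}$ makes the path out of $r(\mu)$ unique, so $\nu=\mu\nu'$ with $\nu'$ an initial segment of $\lambda_{\mu}^{\infty}$ and $\lambda_{\nu}$ a cyclic rotation of $\lambda_{\mu}$, after which a direct computation on $\xi_{\tau}^{n}$ (equivalently, inside $L_R(E)$ via the multiplication rule of Proposition~\ref{Gsigma}(ii) and $\lambda_{\mu}\lambda_{\mu}^{\ast}=s(\lambda_{\mu})$) shows the two composites coincide. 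I expect this final bookkeeping --- splitting on the signs carried by the cycle parts and on whether $\mu\leq\nu$ or $\nu\leq\mu$ --- to be the main (if routine) obstacle, since it is the point where several sub-cases must be collapsed by hand using the no-exit structure.
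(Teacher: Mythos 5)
Your proposal is correct, but it reaches the result by a genuinely different route than the paper. For the characterization of normal generators the paper stays entirely inside $L_R(E)$: from $xx^{\ast}=x^{\ast}x$ it deduces $(xx^{\ast})^2=x^{\ast}x^2x^{\ast}\neq 0$, hence $x^2\neq 0$, hence comparability of $\alpha$ and $\beta$ via Proposition~\ref{Gsigma}, and then applies Lemma~\ref{CycleNoExits} in both directions to settle the exit question. You instead reduce normality to the identity $\alpha\alpha^{\ast}=\beta\beta^{\ast}$, transport it through the injective $\Pi_{\rm ap}$ to the combinatorial condition ``$\alpha\le\tau$ iff $\beta\le\tau$ for all $\tau\in\mathfrak{T}_E$,'' and then manufacture witnessing trails (Lemma~\ref{LemmaAperiodicTrail} plus closure of $\mathfrak{T}_E$ under prepending finite paths) to force comparability and the absence of exits; this trades the paper's short algebraic trick for a transparent statement about trails, at the price of leaning on the full strength of Proposition~\ref{Piaprepresentation}. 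For commutativity the paper again computes case by case inside $L_R(E)$ using Proposition~\ref{Gsigma}(ii), whereas you check that the image operators commute on the basis $\xi_\tau^n$ of $M$; this is arguably cleaner, since a generator of type (2) or (3) is supported on the single trail $\mu\lambda_\mu^\infty$ (the only element of $\mathfrak{T}_E$ extending $\mu$, because the cycle has no exits) and acts there as a shift of the superscript by $\pm l(\lambda_\mu)$, so the remaining bookkeeping you defer really is routine. One further point in your favor: you correctly note that the ``only if'' direction actually yields $\alpha=\beta\lambda_\beta^{m}$ with $m\ge 1$, i.e.\ a closed path without exits rather than literally a cycle; the paper's own proof has the same slippage (it produces a closed path $\lambda$ and never argues $\lambda$ is a cycle), and, as you observe, the subalgebra $\langle G_E^M\rangle$ is unaffected since the higher powers are products of the $m=1$ generators.
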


\begin{definition} The subalgebra $M_R(E)$ given in Proposition \ref{CommutativeSubalgebra} is called the \emph{commutative core} of $L_R(E)$.
\end{definition}

\begin{proof}[Proof of Proposition~\ref{CommutativeSubalgebra}] The necessary and sufficient condition for normality is proved in a similar way by following the ideas given in \cite[Proposition-Definition 3.1]{NR}. We will include here the proof for completeness. Consider first $x = \alpha\beta^{\ast}$ satisfying one of the conditions (1), (2) or (3). If $\alpha = \beta$ then $x$ is clearly normal. Suppose $\beta \leq \alpha$ (the case $\alpha \leq \beta$ is done similarly). We have $\alpha = \beta \lambda$ and $\lambda$ is a cycle without exits; by Lemma \ref{CycleNoExits} $\lambda \lambda^{\ast} = s(\lambda) = r(\beta)$ so
$$x x^{\ast}=  \beta \lambda \beta^{\ast} \beta\lambda^{\ast} \beta^{\ast} = \beta \lambda \lambda^{\ast} \beta^{\ast} = \beta \beta^{\ast} = \beta\alpha^{\ast}\alpha \beta^{\ast} = x^{\ast} x.$$

For the converse implication suppose now that $x = \alpha\beta^{\ast}$ is a non-zero normal element of $L_R(E)$. Since $x\neq 0 $ then $r(\alpha) = r(\beta)$ and since $x x^{\ast} =x^{\ast} x$ then $s(\alpha) = s(\beta)$. We have $(x x^{\ast})^2 \neq 0$ but $(x x^{\ast})^2= x^{\ast}x x x^{\ast}$ so $x^2$ is non-zero. But having $x^2 = (\alpha\beta^{\ast})(\alpha\beta^{\ast}) \neq 0$ implies that $\beta \leq \alpha$ or $\alpha \leq \beta$ by Proposition \ref{Gsigma}. We can assume that $\beta \leq \alpha$. Considering all these facts together necessarily $\alpha = \beta \lambda$ where $\lambda$ is a closed  path. If we suppose that $\lambda$ has an exit then by Lemma \ref{CycleNoExits}, $\lambda \lambda^{\ast} \neq s(\lambda) = r(\beta)$ and finally
 $$x x^{\ast} = \beta \lambda \lambda^{\ast} \beta^{\ast} \neq \beta \beta^{\ast} = x^{\ast} x,$$
 which is a contradiction.

In order to prove the commutativity of  $M_R(E)$, we are going to check the following points:
\begin{itemize}
\item[(i)] Elements of the form (1) commute: this is clear since $\alpha = \beta$ then $\alpha \alpha^{\ast} \in G_E^\Delta$.

\item[(ii)] Elements of the form (1) commute with elements of the form (2): let $x = \alpha \alpha^{\ast}$ of the form (1) and $y = \mu\nu^{\ast}$ of the form (2). We know that $\nu \leq \mu$ and $\mu = \nu \lambda_{\nu}$ where $\lambda_{\nu}$ is a cycle without exits. Let us prove that $xy = yx$. First we show that
$$xy = 0 \Leftrightarrow yx = 0.$$
By Proposition \ref{Gsigma} we have
\begin{align*}
yx= \mu \nu^{\ast} \alpha \alpha^{\ast} & = \begin{cases}
\mu\alpha'\alpha^{\ast} & \text{if }\ \alpha=\nu \alpha' \text{ for some } \alpha' \in \text{Path}(E),\\
\mu(\alpha\nu')^{\ast} & \text{if }\ \nu=\alpha \nu' \text{ for some } \nu' \in \text{Path}(E),\\
0 & \text{otherwise. }\  \end{cases}
\end{align*}

If $\alpha=\nu \alpha'$ for some $\alpha' \in \text{Path}(E)$, $yx= \nu \lambda_\nu \alpha' \alpha^{\ast}$. We know $\lambda_\nu$ is a cycle without exits so then $\alpha'$ is either a subpath of the cycle $\lambda_\nu$, i.e., $\alpha' \leq \lambda_\nu$, or $\alpha' = \lambda_\nu^t$ for some $t \in \mathbb{N}$. Then $xy= \nu \alpha' \alpha'^{\ast}\nu^{\ast}\nu \lambda_\nu \nu^{\ast} = \nu \alpha' \alpha'^{\ast}\lambda_\nu \nu^{\ast} \neq 0$.

If $\nu = \alpha \nu'$ then $yx = \mu {\nu'}^{\ast} \alpha^{\ast} = \nu \lambda_\nu {\nu'}^{\ast} \alpha^{\ast}$. But $\lambda_\nu$ is a cycle without exits so $\nu'$ is either $\nu' \leq \lambda_\nu$ or $\nu' = \lambda_\nu^t$ for some $t$. Then $xy = \alpha \alpha^{\ast}\mu \nu^{\ast}= \alpha \alpha^{\ast}\nu \lambda_\nu {\nu'}^{\ast} \alpha^{\ast} = \alpha \alpha^{\ast} \alpha \nu' \lambda_\nu {\nu'}^{\ast} \alpha^{\ast} = \alpha \nu' \lambda_\nu {\nu'}^{\ast} \alpha^{\ast} \neq 0$.

So if $yx \neq 0$ then $xy \neq 0$, which is equivalent to say that if $xy=0$ we have $yx=0$. The case $yx=0$ implies $xy=0$ can be done similarly.

Now by Proposition \ref{Gsigma} the condition $xy \neq 0$ (which implies $yx \neq 0$) holds if and only if $\alpha=\mu\alpha'$ for some $\alpha'\in \text{Path}(E)$ or $\mu=\alpha\mu'$ for some $\mu'\in \text{Path}(E)$.

Suppose that $\alpha = \mu \alpha'$ then $xy= \alpha \alpha^{\ast}\mu \nu^{\ast}= \alpha {\alpha'}^{\ast} \mu^{\ast} \mu \nu^{\ast} = \alpha {\alpha'}^{\ast} \nu^{\ast} $ and $\alpha=\nu \lambda_\nu \alpha'$. We have that $\lambda_\nu$ is a cycle without exits, so $\alpha = \nu \lambda_\nu^t \lambda'$ where $t \in \mathbb{N}$ and $\lambda' \leq \lambda_\nu$. Then $xy=\nu \lambda_\nu^t \lambda' (\lambda_\nu^{t-1}\lambda')^{\ast}\nu^{\ast}= \nu \lambda_\nu^t \lambda' \lambda'^{\ast} (\lambda_\nu^{t-1})^{\ast}\nu^{\ast}$. Now notice that $\lambda'\lambda'^{\ast}=s(\lambda_\nu)$: we know that $\lambda_\nu$ is a cycle without exits and $\lambda' \leq \lambda_\nu$ so $\lambda_\nu = \lambda' \lambda''$ for certain $\lambda'' \in \text{Path}(E)$; by Lemma \ref{CycleNoExits} $s(\lambda_\nu)= \lambda_\nu \lambda_\nu^{\ast} = \lambda' \lambda''(\lambda' \lambda'')^{\ast} = \lambda' \lambda''\lambda''^{\ast} \lambda'^{\ast}$ and consequently $s(\lambda_\nu)= \lambda'^{\ast} s(\lambda_\nu)\lambda'= \lambda'' \lambda''^{\ast}$ which in the end gives us $s(\lambda_\nu)= \lambda' (\lambda''\lambda''^{\ast}) \lambda'^{\ast}= \lambda' \lambda'^{\ast}$. So then $xy= \nu \lambda_\nu^t (\lambda_\nu^{t-1})^{\ast} \nu^{\ast}= \nu \lambda_\nu \nu^{\ast}$ by using Lemma \ref{CycleNoExits} again. On the other hand, $yx=\mu \nu^{\ast}\alpha \alpha^{\ast}= \nu \lambda_\nu \nu^{\ast} \nu \lambda_\nu^t \lambda' {\lambda'}^{\ast} (\lambda_\nu^t)^{\ast} \nu^{\ast}$. Again we have that $\lambda_\nu^t \lambda' {\lambda'}^{\ast} (\lambda_\nu^t)^{\ast} = s(\lambda_\nu)$ and finally
$$yx= \nu \lambda_\nu \nu^{\ast} \nu \nu^{\ast} = \nu \lambda_\nu \nu^{\ast}=xy.$$

Now assume that $\mu=\alpha\mu'$ for some $\mu'\in \text{Path}(E)$, then by Proposition \ref{Gsigma}, $xy=\alpha\mu'\nu^{\ast}=\mu\nu^{\ast}$. Since $yx \neq 0$, then by Proposition \ref{Gsigma}, $\alpha=\nu\alpha''$ for some $\alpha''\in \text{Path}(E)$ or $\nu=\alpha\nu'$ for some $\nu'\in \text{Path}(E)$. In case $\alpha=\nu\alpha''$, $yx=\mu\alpha''\alpha^{\ast}$. Now $\lambda_{\nu}$ is a cycle without exits so $\alpha'' = \lambda_\nu^t \lambda'$ where $t \in \mathbb{N}$ and $\lambda' \leq \lambda_\nu$; a similar argument to the previous paragraph yields to $xy=yx$. If $\nu=\alpha\nu'$, then by Proposition \ref{Gsigma}, $yx=\mu(\alpha\nu')^{\ast}=\mu\nu^{\ast}=xy$. In the end we have $xy=yx$ as desired.

\item[(iii)] Elements of the form (2) commute: consider  both $x = \alpha \beta^{\ast}$ and $y = \mu\nu^{\ast}$ of the form (2), that is,  $\beta \leq \alpha$ with $\alpha = \beta \lambda_{\beta}$ and $\nu \leq \mu$ with $\mu = \nu \lambda_{\nu}$ where $\lambda_{\beta}$ and $\lambda_{\nu}$ are cycles without exits. Then arguing similarly to case (ii) we have that
$$xy = 0 \Leftrightarrow yx = 0.$$

By Proposition \ref{Gsigma} $xy \neq 0$ (which implies $yx \neq 0$) if $\beta \leq \nu$ or $\nu \leq \beta$; but except when $\beta = \nu$, the remaining options are not possible since $\beta$ and $\nu$ are distinguished paths. In this case then $xy = yx$.
\end{itemize}

Analogously to the previous steps it can be proved that:
\begin{itemize}
\item[(ii)'] elements of the form (1) commute with elements of the form (3);
\item[(iii)'] elements of the form (2) commute with elements of the form (3) and
\item[(iii)''] elements of the form (3) commute.
\end{itemize}
Finally we get that $M_R(E)$ is commutative.
\end{proof}

\begin{remark}\label{Omega_alpha}{\rm For a distinguished path $\alpha$ let $\omega_\alpha = \alpha \lambda_\alpha \alpha^{\ast}$, where $\lambda_\alpha$ is the cycle without exits that starts and ends at $s(\alpha)$. We know by Proposition \ref{CommutativeSubalgebra} that $\omega_\alpha \in G_E^M$. It so happens that the $R$-algebra $< \omega_\alpha > \  \subseteq M_R(E)$ generated by $\omega_\alpha$ is unital (with unit $\alpha \alpha^{\ast}$) and $\omega_\alpha$ is invertible in $< \omega_\alpha >$:
$$\omega_\alpha^{\ast}\omega_\alpha = \omega_\alpha \omega_\alpha^{\ast} = \alpha \alpha^{\ast}.$$

We can define then the powers $\omega_\alpha^n$ for every $n \in \mathbb{Z}$: if $n < 0$ we let $\omega_\alpha^n := (\omega_\alpha^{\ast})^{-n}$ and $\omega_\alpha^0 :=\alpha\alpha^{\ast}$. In this case it follows that there exists a unique $\ast$-isomorphism $\Gamma_\alpha:  \ < \omega_\alpha > \rightarrow R[x,x^{-1}]$ by simply defining $\Gamma_\alpha(\alpha\alpha^{\ast}) = 1$, $\Gamma_\alpha(\omega_\alpha) = x$ and $\Gamma_\alpha(\omega_\alpha^{\ast}) = x^{-1}$.

Notice that we can write $G_E^M$ as a disjoint union
$$G_E^M = G_E^\Delta \cup \{ \omega_\alpha^n : \alpha \text{ is a distinguished path, } n \neq 0\}.$$}
\end{remark}

The following definition is an important tool in order to prove that $M_R(E)$ is a maximal commutative subalgebra of $L_R(E)$. This will be the next goal of this section.

\begin{definition} {\rm Let $\mathcal{A}$ be an $R$-algebra and $\mathcal{B} \subseteq \mathcal{A}$ an $R$-subalgebra. A linear map $\mathbb{E}: \mathcal{A} \rightarrow \mathcal{B}$ is called an \textit{algebraic conditional expectation of $\mathcal{A}$ onto $\mathcal{B}$} if it satisfies the following conditions:
\begin{itemize}
\item[(i)] $\mathbb{E}$ is idempotent and ${\rm Im}(\mathbb{E}) = \mathcal{B}$.
\item[(ii)] For every $a \in \mathcal{A}$ and $b \in \mathcal{B}$, $\mathbb{E}(ba) = b\mathbb{E}(a)$ and $\mathbb{E}(ab) = \mathbb{E}(a)b$.
\end{itemize}}
\end{definition}

As far as the authors know, the idea of an algebraic conditional expectation has not appeared in the literature anywhere before.

For our purposes we need to find an appropriate algebraic conditional expectation of the $R$-algebra $\mathcal{E}$ defined in Proposition \ref{Piaprepresentation}. Consider $M$ the $R$-module generated by the set $\{\xi_{\tau}^n: n \in \mathbb{Z}, \tau \in \mathfrak{T}_E\}$ as in Proposition \ref{Piaprepresentation}. Given $\tau \in \mathfrak{T}_E$ an essentially aperiodic trail, first define the projection $Q_{\tau} \in \mathcal{E}$, $Q_{\tau}: M \rightarrow M$ such that: for every $\tau' \in \mathfrak{T}_E$ and every $n \in \mathbb{Z}$,
\begin{align*}
Q_\tau(\xi_{\tau'}^n)  & = \begin{cases}
\xi_{\tau}^n & \text{if }\ \tau' = \tau, \\
0 & \text{otherwise. }\  \end{cases}
\end{align*}

\begin{remark}{\rm It is straightforward that $Q_\tau$ is idempotent and all elements in the collection $(Q_\tau)_{\tau \in \mathfrak{T}_E}$ are mutually orthogonal. Furthermore it satisfies that, for any $m \in M$, there exists a unique minimal finite subset $\{\tau_1,\ldots,\tau_n\}$ of $\mathfrak{T}_E$ such that
$$(\sum_{i=1}^n Q_{\tau_i})(m) = m.$$ In fact if
$m=\sum_{i,j}r_{ij}\xi_{\tau_i}^{t_j}$ with $1\leq i\leq n, 1\leq
j\leq m$, $r_{ij}\in R$, $\tau_i\in \mathfrak{T}_E$ and $t_j\in
\mathbb{Z}$ then $(\sum_{i=1}^n Q_{\tau_i})(m) = m$ and for any
subset $\{\tau'_1,\ldots,\tau'_r\}$ of $\mathfrak{T}_E$ with
$(\sum_{i=1}^r Q_{\tau'_i})(m) = m$ we have
$\{\tau_1,\ldots,\tau_n\}\subseteq \{\tau'_1,\ldots,\tau'_r\}$.}
\end{remark}

\begin{definition} For every $T \in \mathcal{E}$ we define the map $E_{\rm ap}:\mathcal{E} \rightarrow \mathcal{E}$, $T \mapsto E_{\rm ap}(T)$ as follows: for any $m \in M$,
$$E_{\rm ap}(T)(m):= (\sum_{i=1}^n Q_{\tau_i}TQ_{\tau_i})(m).$$
\end{definition}

\begin{proposition}\label{Eap} Let $\mathcal{Q} = \{Q_\tau\}_{\tau \in \mathfrak{T}_E} \subset \mathcal{E}$.
Then the map $E_{\rm ap}$ is an algebraic conditional expectation of $\mathcal{E}$ onto $\mathcal{Q'}$ where
$$\mathcal{Q'}=\{T \in \mathcal{E}: TQ_{\tau} = Q_{\tau}T \text{ for every } \tau \in \mathfrak{T}_E\}.$$
\end{proposition}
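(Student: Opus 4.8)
The plan is to verify directly the three defining properties of a conditional expectation for the map $E_{\rm ap}:\mathcal{E}\to\mathcal{E}$, after first checking that the formula is well-defined and that its image is exactly $\mathcal{Q}'$. For well-definedness, suppose $(\sum_{i=1}^n Q_{\tau_i})(m)=m=(\sum_{j=1}^k Q_{\sigma_j})(m)$ for two finite families of trails; using that the $Q_\tau$ are mutually orthogonal self-adjoint idempotents (Remark \ref{Qtau}(C)), one sees that both sums agree on $m$ because $Q_\tau T Q_\tau (m)$ only depends on the component of $m$ in the range of $Q_\tau$, and $Q_\tau(m)=0$ unless $\tau$ is one of the trails ``supporting'' $m$; enlarging either family by trails that annihilate $m$ changes nothing. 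Hence $E_{\rm ap}(T)(m)$ is unambiguous, and $R$-linearity in $T$ (and additivity in $m$, so that $E_{\rm ap}(T)\in\mathrm{End}_R(M)$) is routine. One should also note $E_{\rm ap}(T)$ lies in $\mathcal{E}$: since each $Q_\tau$ is homogeneous of degree zero, $Q_\tau T Q_\tau$ has the same degree as $T$ when $T$ is homogeneous, and one extends by linearity.

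Next I would identify the image. If $T\in\mathcal{Q}'$, then $Q_\tau T Q_\tau = T Q_\tau Q_\tau = T Q_\tau$, so for $m$ with $(\sum_i Q_{\tau_i})(m)=m$ we get $E_{\rm ap}(T)(m)=\sum_i T Q_{\tau_i}(m)=T(\sum_i Q_{\tau_i}(m))=T(m)$; thus $E_{\rm ap}(T)=T$, which simultaneously shows $E_{\rm ap}$ restricts to the identity on $\mathcal{Q}'$ (idempotency, property (ii)) and that $\mathcal{Q}'\subseteq\mathrm{Im}(E_{\rm ap})$. For the reverse inclusion, I must check $E_{\rm ap}(T)\in\mathcal{Q}'$ for every $T$, i.e. $E_{\rm ap}(T)Q_\sigma=Q_\sigma E_{\rm ap}(T)$ for all $\sigma\in\mathfrak{T}_E$. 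Evaluating on an arbitrary $m$: choose a finite family $\{\tau_i\}$ with $\sum_i Q_{\tau_i}$ acting as identity on both $m$ and $Q_\sigma(m)$ (one may always take the union of two such families, adding $\sigma$ if needed). Then $E_{\rm ap}(T)Q_\sigma(m)=\sum_i Q_{\tau_i}TQ_{\tau_i}Q_\sigma(m)=Q_\sigma T Q_\sigma(m)$ by orthogonality (only the $i$ with $\tau_i=\sigma$ survives, and if $Q_\sigma(m)=0$ both sides vanish), while $Q_\sigma E_{\rm ap}(T)(m)=\sum_i Q_\sigma Q_{\tau_i}TQ_{\tau_i}(m)=Q_\sigma T Q_\sigma(m)$ likewise; these agree, so $E_{\rm ap}(T)\in\mathcal{Q}'$. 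Combined with the previous paragraph, $\mathrm{Im}(E_{\rm ap})=\mathcal{Q}'$.

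It remains to verify positivity (property (i), in the ``locally positive'' sense of the definition above) and the module property (iii). For local positivity: if $T=\sum_\ell S_\ell^{\ast}S_\ell$ is positive in $\mathcal{E}$, then for each trail $\tau$, $Q_\tau T Q_\tau=\sum_\ell (S_\ell Q_\tau)^{\ast}(S_\ell Q_\tau)$ is again positive (using $Q_\tau^{\ast}=Q_\tau$), hence a finite sum $\sum_i Q_{\tau_i}TQ_{\tau_i}$ of positive elements is positive, and applying it to any $m$ yields a positive element of $M$ — this is exactly local positivity. For (iii), let $T\in\mathcal{E}$ and $U\in\mathcal{Q}'$; I want $E_{\rm ap}(UT)=U E_{\rm ap}(T)$. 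Fix $m$, pick $\{\tau_i\}$ with $\sum_i Q_{\tau_i}$ acting as the identity on $m$; since $U$ commutes with each $Q_{\tau_i}$, we have $Q_{\tau_i}(UT)Q_{\tau_i}=UQ_{\tau_i}TQ_{\tau_i}$, so summing gives $E_{\rm ap}(UT)(m)=U(\sum_i Q_{\tau_i}TQ_{\tau_i})(m)=U E_{\rm ap}(T)(m)$, as required (one should also check $\sum_i Q_{\tau_i}$ acts as identity on $UT(m)$ if one insists on using the ``canonical'' family for $UT$, but by well-definedness any sufficiently large common family works). The main obstacle is purely bookkeeping: making the well-definedness argument airtight, i.e. confirming that the finite family of trails supporting a given vector behaves coherently under the endomorphisms $S_\mu,S_\mu^{\ast}$ and that enlarging families never changes the value; once that is nailed down, properties (i)–(iii) fall out from the orthogonality and self-adjointness of the $Q_\tau$ together with the definition of $\mathcal{Q}'$.
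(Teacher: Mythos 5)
Your proposal is correct and follows essentially the same route as the paper: both verify local positivity via $Q_\tau T Q_\tau=\sum(B Q_\tau)^{\ast}(B Q_\tau)$, identify $\mathrm{Im}(E_{\rm ap})=\mathcal{Q}'$ by showing $E_{\rm ap}$ fixes $\mathcal{Q}'$ pointwise and lands in it, and check the module property using that elements of $\mathcal{Q}'$ commute with each $Q_{\tau_i}$. Your explicit well-definedness check (independence of the chosen finite family of trails) is a small but worthwhile addition that the paper's proof leaves implicit.
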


\begin{proof} We need to check the conditions from the definition of algebraic conditional expectation above.
\begin{itemize}
\item[(i)] $(E_{\rm ap})^2 = E_{\rm ap}$ follows from the fact that the elements in the collection $(Q_\tau)_{\tau \in \mathfrak{T}_E}$ are mutually orthogonal idempotents. Let us prove that  ${\rm Im}(E_{\rm ap}) = \mathcal{Q'}$:

    Consider $T \in {\rm Im}(E_{\rm ap})$, that is, $T = E_{\rm ap}(T')$ for some $T' \in \mathcal{E}$. For $m \in M$ there exist $\{\tau_1,\ldots,\tau_n\} \subset \mathfrak{T}_E$ such that $(\sum_{i=1}^n Q_{\tau_i})(m) = m$, so we have $T(m)= E_{\rm ap}(T')(m) = (\sum_{i=1}^n Q_{\tau_i}T'Q_{\tau_i})(m)$. Let us check that $T \in \mathcal{Q'}$: consider $Q_\tau$ for some $\tau \in \mathfrak{T}_E$ then
    $$(Q_\tau T)(m)= Q_\tau((\sum_{i=1}^n Q_{\tau_i}T'Q_{\tau_i})(m))= (\sum_{i=1}^n Q_\tau Q_{\tau_i}T'Q_{\tau_i})(m).$$
    Since the elements in the collection $(Q_\tau)_{\tau \in \mathfrak{T}_E}$ are mutually orthogonal idempotents,
    $$T ((Q_\tau)(m))= ( Q_{\tau}T'Q_{\tau})(Q_\tau(m))= Q_{\tau}T'Q_{\tau}(m).$$
    Also since $(\sum_{i=1}^n Q_{\tau_i})(m) = m$, then $Q_{\tau}(m)=0$ if $\tau\neq \tau_i$ for each $i$.
    In any case since the collection $(Q_\tau)_{\tau \in \mathfrak{T}_E}$ consists of mutually orthogonal idempotents,
    \begin{align*}
    (T Q_\tau)(m) = (Q_\tau T)(m)  & = \begin{cases}
    Q_\tau T' Q_\tau (m) & \text{if }\ \tau = \tau_i \text{ for some } i=1,\ldots,n \\
    0 & \text{otherwise. }\  \end{cases}
   \end{align*}
    Conversely consider $T \in \mathcal{Q'}$. For $m \in M$ then again there exist $\{\tau_1,\ldots,\tau_n\} \subset \mathfrak{T}_E$ such that $(\sum_{i=1}^n Q_{\tau_i})(m) = m$. So
    \begin{align*}
    & E_{\rm ap}(T)(m)= (\sum_{i=1}^n Q_{\tau_i}TQ_{\tau_i})(m) = (\sum_{i=1}^n T Q_{\tau_i}Q_{\tau_i})(m) =\\
    & = (\sum_{i=1}^n T Q_{\tau_i})(m) = T (\sum_{i=1}^n Q_{\tau_i})(m)= T(m).
    \end{align*}
    Which means that $T \in {\rm Im}(E_{\rm ap})$.
\item[(ii)] Let $T' \in \mathcal{Q'}$. For $T \in \mathcal{E}$ and for $m \in M$ there exist $\{\tau_1,\ldots,\tau_n\} \subset \mathfrak{T}_E$ such that $(\sum_{i=1}^n Q_{\tau_i})(m) = m$. Then we have
\begin{align*}
& E_{\rm ap}(T'T)(m) = (\sum_{i=1}^n Q_{\tau_i}T'TQ_{\tau_i})(m) = (\sum_{i=1}^n T' Q_{\tau_i}TQ_{\tau_i})(m) =\\
& = T'(\sum_{i=1}^n Q_{\tau_i}TQ_{\tau_i})(m) = T' E_{\rm ap}(T)(m).
\end{align*}
Similarly it can be shown that $E_{\rm ap}(TT')(m)= E_{\rm
ap}(T)T'(m)$. \qedhere
\end{itemize}
\end{proof}

 As in Proposition \ref{Piaprepresentation}, let $M$ be the $R$-module generated by the set $\{\xi_{\tau}^n: n \in \mathbb{Z}, \tau \in \mathfrak{T}_E\}$. Given a path $\alpha \in \text{Path}(E)$, we define $P_{\alpha} \in \mathcal{E}$, $P_{\alpha}: M \rightarrow M$ in the following way: for every $\tau \in \mathfrak{T}_E$ and every $n \in \mathbb{Z}$,
\begin{align*}
P_\alpha(\xi_{\tau}^n)  & = \begin{cases}
\xi_{\tau}^n & \text{if }\ \alpha \leq \tau, \\
0 & \text{otherwise. }\  \end{cases}
\end{align*}

Take into account that we have $P_\alpha = S_\alpha S_\alpha^{\ast}$.

We are now in the position to show the existence of an algebraic conditional expectation onto the subalgebra $M_R(E)$.

\begin{theorem}\label{EM} There exists a unique algebraic conditional expectation $E_M$ of $L_R(E)$ onto $M_R(E)$ such that
\begin{align*}
E_M(x)  & = \begin{cases}
x & \text{if }\ x \in G_E^M,\\
0 & \text{if }\ x \in G_E \setminus G_E^M. \end{cases}
\end{align*}
Furthermore, for the essentially aperiodic representation $\Pi_{\rm ap}: L_R(E) \rightarrow \mathcal{E}$ and the algebraic conditional expectation $E_{\rm ap}: \mathcal{E} \rightarrow \mathcal{E}$ we have that
\begin{itemize}
\item[(i)] $\Pi_{\rm ap} \circ E_M = E_{\rm ap} \circ \Pi_{\rm ap}$, and
\item[(ii)] $E_{\rm ap}(T) = T$ for every $T \in \mathcal{P'}$ where
$$\mathcal{P'}=\{T \in \mathcal{E}: TP_{\alpha} = P_{\alpha}T \text{ for every } \alpha \in \text{Path}(E)\}.$$
\end{itemize}
\end{theorem}

\begin{proof} First let us prove the following claims:
\begin{itemize}
\item[(a)] $E_{\rm ap}(\Pi_{\rm ap}(x)) = \Pi_{\rm ap}(x)$ for every $x \in M_R(E)$.\\
 Consider the collection $\mathcal{P} = \{P_{\alpha}\}_{\alpha \in \text{Path}(E)} = \{\Pi_{\rm ap}(\alpha \alpha^{\ast})\}_{\alpha \in \text{Path}(E)}$. Notice that
 \begin{align*}
Q_\tau\Pi_{\rm ap}(\alpha \alpha^{\ast})= \Pi_{\rm ap}(\alpha \alpha^{\ast})Q_\tau & = \begin{cases}
Q_\tau & \text{if }\ \alpha \leq \tau,\\
0 & \text{otherwise. }\  \end{cases}
\end{align*}
 Thus $\mathcal{P} \subset \mathcal{Q'}$, where
$\mathcal{Q'}=\{T \in \mathcal{E}: TQ_{\tau} = Q_{\tau}T \text{ for every } \tau \in \mathfrak{T}_E\}$.

Suppose $T \in \mathcal{E}$ such that $T$ commutes with all $P_{\alpha}$ , $\alpha \in \text{Path}(E)$. In particular for every $\tau \in \mathfrak{T}_E$ we have that $TP_{\tau_{(n)}}=P_{\tau_{(n)}}T$ for every $n\geq 0$. Observe that for any $m \in M$ there exists some integer $N \geq 0$ such that for every $n \geq N$, $P_{\tau_{(n)}}(m)=Q_{\tau}(m)$. So we get $TQ_\tau = Q_\tau T$ which implies $\mathcal{P'}\subset \mathcal{Q'}$ where
$$\mathcal{P'}=\{T \in \mathcal{E}: TP_{\alpha} = P_{\alpha}T \text{ for every } \alpha \in \text{Path}(E)\}.$$
Therefore we have that
$$E_{\rm ap}(T) = T \text{ for every } T \in \mathcal{P'}.$$

In particular for all $x \in M_R(E)$, it follows from Proposition \ref{CommutativeSubalgebra} that $\Pi_{\rm ap}(x) \in \mathcal{P'}$ so we get
$$E_{\rm ap}(\Pi_{\rm ap}(x)) = \Pi_{\rm ap}(x), \text{ for every } x \in M_R(E).$$

\item[(b)] $E_{\rm ap}(\Pi_{\rm ap}(x)) = 0$ for every  $x \in G_E \setminus G_E^M$.\\
Consider $x = \alpha \beta^{\ast} \in G_E\setminus G_E^M$, that is, $r(\alpha) = r(\beta)$. Let $\tau \in \mathfrak{T}_E$ then clearly $Q_\tau S_\alpha S_\beta^{\ast} Q_\tau \neq 0$ only if $\beta \leq \tau$ and $\alpha \leq \tau$. In any case then we will have that $\alpha \leq \beta$ or $\beta \leq \alpha$ and since $x\not\in G_E^M$, $\alpha\neq\beta$. We can suppose the case $\beta \leq \alpha$; then it follows that $s(\alpha) = s(\beta)$ and necessarily $\alpha = \beta \lambda$ for some closed path $\lambda$. Since $\beta \leq \tau$ assume that $\tau = \beta \mu$ for some path $\mu$. The fact that $Q_\tau S_\alpha S_\beta^{\ast} Q_\tau \neq 0$ implies that $\beta \lambda \mu = \alpha \mu = \tau=\beta\mu$, and this yields $\lambda \mu = \mu$ which means that $\tau$ is periodic with period $\lambda$. Since $\tau$ is essentially aperiodic then $\lambda$ has no exits. Then $\mu=r(\lambda)$ and so $\lambda\in E^0$. Thus $\alpha=\beta$ which is impossible. Finally we have that then $E_{\rm ap}(\Pi_{\rm ap}(x)) = 0$ for every $x \in G_E \setminus G_E^M$.
\end{itemize}

So we have that $E_{\rm ap}(\Pi_{\rm ap}(G_E)) \subseteq \Pi_{\rm ap}(G_E)$. Then extending linearly we get that $E_{\rm ap}(\Pi_{\rm ap}(L_R(E))) \subseteq \Pi_{\rm ap}(L_R(E))$. Since $\Pi_{\rm ap}$ is injective by Proposition \ref{Piaprepresentation}, then there exists a unique linear map $E_M : L_R(E) \rightarrow L_R(E)$ such that $\Pi_{\rm ap} \circ E_M = E_{\rm ap} \circ \Pi_{\rm ap}$. Besides, by the injectivity of $\Pi_{\rm ap}$ and by (a) and (b) we have
\begin{align*}
E_M(x)  & = \begin{cases}
x & \text{if }\ x \in G_E^M,\\
0 & \text{if }\ x \in G_E \setminus G_E^M. \end{cases}
\end{align*}

It then is immediate that $E_M$ is an algebraic conditional expectation of $L_R(E)$ onto $M_R(E)$.
\end{proof}

We can prove now the main result of this section.

\begin{theorem}\label{Mismaximal} Let $E$ be a graph and $R$ a commutative ring with unit. Consider $M_R(E) \subseteq L_R(E)$. Then
$$M_R(E) = \{x \in L_R(E): xd=dx \text{ for every } d \in \Delta(E)\}.$$
Furthermore, $M_R(E)$ is a maximal commutative subalgebra of $L_R(E)$.
\end{theorem}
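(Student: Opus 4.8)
The plan is to deduce the description of $M_R(E)$ as the relative commutant of $\Delta(E)$ from the conditional expectation $E_M$ built in Theorem \ref{EM}, and then obtain maximality as a formal consequence.

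First I would record the easy inclusion. Since every $\alpha\alpha^{\ast}$ is a normal generator of type (1) in Proposition \ref{CommutativeSubalgebra}, we have $G_E^{\Delta} \subseteq G_E^M$, hence $\Delta(E) \subseteq M_R(E)$; and $M_R(E)$ is commutative by Proposition \ref{CommutativeSubalgebra}. Therefore every $x \in M_R(E)$ commutes with every $d \in \Delta(E)$, which gives $M_R(E) \subseteq \{x \in L_R(E): xd = dx \text{ for every } d \in \Delta(E)\}$. I would also note at this point that $M_R(E)$ is a $\ast$-subalgebra, because $G_E^M$ is invariant under $\alpha\beta^{\ast} \mapsto \beta\alpha^{\ast}$ (conditions (1)--(3) are simply permuted by the involution).

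For the reverse inclusion, let $x \in L_R(E)$ commute with every $d \in \Delta(E)$. In particular $x$ commutes with each $\alpha\alpha^{\ast}$, so $\Pi_{\rm ap}(x)$ commutes with each $P_{\alpha} = \Pi_{\rm ap}(\alpha\alpha^{\ast})$; that is, $\Pi_{\rm ap}(x) \in \mathcal{P'}$ in the notation of the proof of Theorem \ref{EM}. By the argument establishing claim (a) there --- which uses Remark \ref{Qtau}(A) to pass from commutation with all the $P_{\tau_{(n)}}$ to commutation with $Q_{\tau}$, so that $\mathcal{P'} \subseteq \mathcal{Q'}$ and $E_{\rm ap}$ fixes $\mathcal{P'}$ pointwise --- we obtain $E_{\rm ap}(\Pi_{\rm ap}(x)) = \Pi_{\rm ap}(x)$. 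Combining this with the intertwining identity $\Pi_{\rm ap}\circ E_M = E_{\rm ap}\circ\Pi_{\rm ap}$ of Theorem \ref{EM} yields $\Pi_{\rm ap}(E_M(x)) = \Pi_{\rm ap}(x)$, and the injectivity of $\Pi_{\rm ap}$ (Proposition \ref{Piaprepresentation}) forces $E_M(x) = x$. Since $E_M$ is a conditional expectation of $L_R(E)$ onto $M_R(E)$, its image is $M_R(E)$, so $x \in M_R(E)$; this proves the displayed equality.

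Finally, maximality follows formally: $M_R(E)$ is a commutative $\ast$-subalgebra by the above, and if $B$ is any commutative subalgebra of $L_R(E)$ with $M_R(E) \subseteq B$, then for each $x \in B$ and each $d \in \Delta(E) \subseteq M_R(E) \subseteq B$ commutativity of $B$ gives $xd = dx$, so $x$ lies in the relative commutant of $\Delta(E)$, which by the equality just proved is $M_R(E)$. Hence $B = M_R(E)$, so $M_R(E)$ is a maximal commutative subalgebra, a fortiori a maximal commutative $\ast$-subalgebra. I expect the only delicate point to be the implication ``$\Pi_{\rm ap}(x) \in \mathcal{P'} \Rightarrow E_{\rm ap}(\Pi_{\rm ap}(x)) = \Pi_{\rm ap}(x)$'', which rests on Remark \ref{Qtau}(A) and was essentially already carried out inside the proof of Theorem \ref{EM}; the rest is bookkeeping.
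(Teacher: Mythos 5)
Your proposal is correct and follows essentially the same route as the paper: the easy inclusion via commutativity of $M_R(E)$, the reverse inclusion by showing $\Pi_{\rm ap}(x)\in\mathcal{P'}$ is fixed by $E_{\rm ap}$ and invoking $\Pi_{\rm ap}\circ E_M=E_{\rm ap}\circ\Pi_{\rm ap}$ together with injectivity of $\Pi_{\rm ap}$, and then maximality as a formal consequence of the commutant description. The only (harmless) additions are your explicit remarks that $M_R(E)$ is $\ast$-closed and that $E_M$ has image $M_R(E)$, both of which the paper leaves implicit.
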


\begin{proof} For the equality $M_R(E) = \{x \in L_R(E): xd=dx \text{ for every } d \in \Delta(E)\}$ we need to prove the double inclusion. First it is clear that
$$M_R(E) \subseteq \{x \in L_R(E): xd=dx \text{ for every } d \in \Delta(E)\}$$
 since $\Delta(E) \subset M_R(E)$ and $M_R(E)$ is commutative by Proposition \ref{CommutativeSubalgebra}.

For the other containment consider an element $x \in L_R(E)$ such that $xd=dx$ for every $d \in \Delta(E)$. Then for every path $\alpha \in \text{Path}(E)$ we know that $\alpha \alpha^{\ast} \in \Delta(E)$ and then
$$\Pi_{\rm ap}(x) P_\alpha =\Pi_{\rm ap}(x)\Pi_{\rm ap}(\alpha \alpha^{\ast})= \Pi_{\rm ap}(x \alpha \alpha^{\ast}) = \Pi_{\rm ap}(\alpha \alpha^{\ast} x) = P_\alpha \Pi_{\rm ap}(x).$$
 By Theorem \ref{EM}, we know that $E_{\rm ap}(T) = T \text{ for every } T \in \mathcal{P'}$ where $$\mathcal{P'}=\{T \in \mathcal{E}: TP_{\alpha} = P_{\alpha}T \text{ for every } \alpha \in \text{Path}(E)\}.$$
In particular it follows that $\Pi_{\rm ap}(x) = E_{\rm ap}(\Pi_{\rm ap}(x))$, using Theorem \ref{EM}, $E_{\rm ap}(\Pi_{\rm ap}(x)) = \Pi_{\rm ap}(E_M(x))$ and hence $\Pi_{\rm ap}(x) = \Pi_{\rm ap}(E_M(x))$. Since $\Pi_{\rm ap}$ is injective by Proposition \ref{Piaprepresentation}, we have that $x = E_M(x) \in M_R(E)$ and we are done.

In order to prove that $M_R(E)$ is a maximal commutative subalgebra inside $L_R(E)$, consider $\mathcal{C}$ a commutative subalgebra of $L_R(E)$ such that $M_R(E) \subseteq \mathcal{C}$. Since we have $\Delta(E) \subseteq M_R(E) \subseteq \mathcal{C}$ then in particular
$$\mathcal{C} \subseteq \{x \in L_R(E): xd=dx \text{ for every } d \in \Delta(E)\} = M_R(E).$$
So in the end necessarily $\mathcal{C} = M_R(E)$.
\end{proof}

The following example shows that the core is not the unique maximal commutative subalgebra of $L_R(E)$.
\begin{example}
\rm Let $E$ be the following graph:
$$
\xymatrix{
 \bullet_{a}\ar[r]^{\alpha} &\bullet_{b} \\
}
$$

and $R$ be a commutative ring. Then $L_R(E)\cong \mathbb{M}_2(R)$ and $M_R(E)$ is isomorphic to the subalgebra $B=\{(b_{ij})| (b_{ij})\in \mathbb{M}_2(R), b_{12}=b_{21}=0\}$ of $\mathbb{M}_2(R)$. Let $A=\{(a_{ij})| (a_{ij})\in \mathbb{M}_2(R), a_{11}=a_{22}, a_{21}=0\}$ be the subalgebra of $\mathbb{M}_2(R)$. Then $A$ is a commutative subalgebra of $\mathbb{M}_2(R)$ that $A$ is not a subalgebra of $B$. Therefore $M_R(E)$ in general is not a unique maximal commutative subalgebra of $L_R(E)$.

\end{example}

We state the following consequence. The center of a Leavitt path algebra was studied in \cite{AC} and \cite{CMMSS}.

\begin{corollary} The center of the Leavitt path algebra $L_R(E)$, that is
 $$Z(L_R(E))= \{ x \in L_R(E): xy = yx \text{ for every } y \in L_R(E)\}$$
 satisfies that $Z(L_R(E)) \subseteq M_R(E)$.
\end{corollary}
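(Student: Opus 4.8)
The strategy is to exhibit the center as a subalgebra that commutes with everything in $\Delta(E)$, and then invoke the characterization of $M_R(E)$ just proved in Theorem \ref{Mismaximal}. This reduces the whole statement to essentially a one-line observation, so the "hard part" is already behind us once we have Theorem \ref{Mismaximal} in hand.

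Concretely, I would argue as follows. Let $x \in Z(L_R(E))$. By definition $xy = yx$ for \emph{every} $y \in L_R(E)$. In particular, $x$ commutes with every generator $\alpha\alpha^{\ast} \in G_E^{\Delta}$, since each such generator lies in $L_R(E)$. Because $\Delta(E)$ is by definition the $R$-subalgebra generated by $G_E^{\Delta}$, and commuting with a generating set forces commuting with the whole subalgebra (the commutant of a fixed element is itself a subalgebra), we conclude $xd = dx$ for every $d \in \Delta(E)$. Theorem \ref{Mismaximal} identifies precisely this set, $\{x \in L_R(E): xd = dx \text{ for every } d \in \Delta(E)\}$, with $M_R(E)$; hence $x \in M_R(E)$. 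Since $x$ was an arbitrary element of the center, $Z(L_R(E)) \subseteq M_R(E)$.

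There is no real obstacle here: the entire content has been packaged into Theorem \ref{Mismaximal}, and the corollary only needs the trivial inclusion $\Delta(E) \subseteq L_R(E)$ together with the fact that a central element commutes with all of $L_R(E)$ in particular with the diagonal. One small point worth stating explicitly in the write-up is why "commutes with every generator of $\Delta(E)$" upgrades to "commutes with every element of $\Delta(E)$": if $x$ commutes with $a$ and with $b$ then it commutes with $a+b$, with $ab$, and with $ra$ for $r \in R$, so the set of elements commuting with a fixed $x$ is an $R$-subalgebra; applying this with $x$ fixed and ranging over $G_E^{\Delta}$ gives the claim. Everything else is immediate from the already-established results, so the proof will be only a few lines long.
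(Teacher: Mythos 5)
Your proof is correct and follows exactly the route the paper intends: the corollary is stated immediately after Theorem \ref{Mismaximal} precisely because a central element commutes in particular with all of $\Delta(E)$, and that theorem identifies the commutant of $\Delta(E)$ with $M_R(E)$. Nothing further is needed.
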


It is an open question as to which $R$ and $E$ have the property that $Z(L_R(E)) = M_R(E)$.

\begin{remark}{\rm Using the new approach of the commutative core of $L_R(E)$, we can re-establish the structure of commutative Leavitt path algebras (originally given in \cite[Proposition 2.7]{AC}) and commutative Cohn path algebras.}
\end{remark}

To finish we include a specific example in order to illustrate some of the ideas.
\begin{example} \rm Consider the following graph $E$:
$$
\xymatrix{
 & \bullet_{u} & \\
 \bullet_{v_1} \ar[ur]^{f} \ar[r]^{e_1} &  \bullet_{v_2} \ar[r]^{e_2}  & \bullet_{v_3} \ar@(ul,ur)^{c}
}
$$

First observe that the set of trails in $E$ are given by $\{u, f, ccc\cdots, e_2ccc\cdots, e_1e_2ccc\cdots\}$. By Proposition \ref{PropMaxTotOrdSubset} we can determine the maximal totally ordered subsets of the diagonal generator set $G_E^{\Delta}$:

$\mathcal{P}_{u}=\{u\}$,

$\mathcal{P}_{f}=\{v_1,ff^{\ast}\}$,

$\mathcal{P}_{ccc\cdots}=\{v_3, cc^{\ast}, cc(cc)^{\ast}, \ldots\}=\{v_3\}$ (since $cc^{\ast}=v_3$),

$\mathcal{P}_{e_2ccc\cdots}=\{v_2, e_2e_2^{\ast}, e_2c(e_2c)^{\ast}, e_2cc(e_2cc)^{\ast}, \ldots\}=\{v_2\}$ (since $cc^{\ast}=v_3$ and $e_2e_2^{\ast}=v_2$) and

$\mathcal{P}_{e_1e_2ccc\cdots}=\{v_1, e_1e_1^{\ast}, e_1e_2(e_1e_2)^{\ast}, e_1e_2c(e_1e_2c)^{\ast}, e_1e_2cc(e_1e_2cc)^{\ast}, \ldots\}=\{v_1, e_1e_1^{\ast}\}$ (since again $e_2e_2^{\ast}=v_2$ and $cc^{\ast}=v_3$).

According to Definition \ref{EssentiallyAperiodicTrail} we identify the periodic trails as the set

$\{ccc\cdots,e_2ccc\cdots,e_1e_2ccc\cdots\}$. (Notice that the seeds of $ccc\cdots$, $e_2ccc\cdots$, and $e_1e_2ccc..$ are respectively $(v_3,c)$, $(e_2,c)$, and $(e_1e_2,c)$). And the set of essentially aperiodic trails is given by the following:
$$\mathfrak{T}_E=\{u,f,ccc\cdots,e_2ccc\cdots,e_1e_2ccc\cdots\}.$$

In this case there are not continuous essentially aperiodic trails. By Definition \ref{Defdistinguished}, the distinguished paths in $E$ are then $\{v_3,e_2,e_1e_2\}$.

Remember $M_R(E)$ is the $R$-algebra generated by $G_E^M$, $M_R(E) = <G_E^M> $. We compute the normal generators $G_E^M$. We know
$G_E^M = G_E^\Delta \cup \{ \omega_\alpha^n : \alpha \text{ is a distinguished path, } n \neq 0\}$. Therefore:
 $$G_E^M = \{u, v_1, v_2, v_3, ff^{\ast}, e_1e_1^{\ast}\} \cup \{c^n, e_2c^ne_2^\ast, e_1e_2c^n(e_1e_2)^{\ast}\} \text{ with } 0 \neq n \in \mathbb{Z}.$$

To finish with this example let us check that the core $M_R(E)$ and the center $Z(L_R(E))$ are different subalgebras of $L_R(E)$, that is, $Z(L_R(E)) \subsetneq M_R(E)$: consider $e_1e_1^{\ast} \in M_R(E)$ but $e_1e_1^{\ast}e_1 = e_1 \neq 0 = e_1e_1e_1^{\ast}$ so then $e_1e_1^{\ast} \notin Z(L_R(E))$.

\end{example}

\section{General Uniqueness Theorem for Leavitt path algebras}

In this final section we focus our attention on giving a new version of the Uniqueness Theorem for Leavitt path algebras; concretely we will prove that we only need to check the injectivity of a homomorphism when we restrict to the commutative core $M_R(E)$. For this purpose first we will write here the so-called Reduction Theorem for Leavitt path algebras but referred to the case over a commutative ring with unit; the proof is followed similarly to that given in \cite[Theorem 2.2.11]{AAS} so we will omit it here.

\begin{theorem}\label{ReductionThm} Let $E$ be an arbitrary graph and $R$ a commutative ring with unit. For any non-zero element $a \in L_R(E)$ there exist $\mu, \nu \in \text{Path}(E)$ such that either:
\begin{itemize}
\item[(i)]$0 \neq \mu^{\ast}a\nu = rv$, for some $r \in R \setminus \{0\}$ and $v \in E^0$, or
\item[(ii)]$0 \neq \mu^{\ast
}a\nu = p(\lambda)$, where $\lambda$ is a cycle without exits and $p(x)$ is a non-zero polynomial in $R[x,x^{-1}]$.
\end{itemize}
\end{theorem}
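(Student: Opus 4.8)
The plan is to prove the Reduction Theorem by a two-stage reduction, exactly as in the graded-local analysis of $L_R(E)$. First I would use the $\mathbb{Z}$-grading of $L_R(E)$: write the nonzero element $a$ in terms of the generating set $G_E$, collect it as a finite sum $a = \sum_{n} a_n$ of homogeneous components $a_n \in L_R(E)_n$, and pass to a single homogeneous piece by premultiplying and postmultiplying by suitable vertices (the local units) and then by suitable $\mu^\ast$, $\nu$ so that the surviving element is homogeneous while staying nonzero; here one uses that $L_R(E)_n = \operatorname{span}_R\{pq^\ast : l(p)-l(q)=n\}$ and that $rv \ne 0$ for $r \in R\setminus\{0\}$. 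The first real step is therefore: reduce to the case where $a$ lies in a ``corner'' $vL_R(E)v$ for a single vertex $v$ and, after a further cut-down, to the case where $a = \sum_i r_i \alpha_i \beta_i^\ast$ with all $\alpha_i,\beta_i$ having the same source $v$ and all $r(\alpha_i)=r(\beta_i)$; the standard trick is to multiply by $\alpha_1^\ast$ on the left and $\alpha_1$ on the right (or the analogous choice making the ``shortest'' term into a vertex), using the CK-1 relations to annihilate incompatible terms, so that one term becomes a scalar multiple of a vertex $w$ and all remaining terms live in $wL_R(E)w$.

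The second stage is the usual dichotomy according to whether the vertex $w$ so produced lies on a cycle without exits or not. I would argue: after the first reduction we may assume $0 \ne a = rw + \sum_{j\ge 2} r_j\gamma_j\delta_j^\ast$ with $w = s(\gamma_j)=s(\delta_j)$ and $r\in R$ (possibly $r=0$ at this intermediate point, but we can always arrange a nonzero term). If $w$ does not lie on a cycle without exits, one can repeatedly exploit Condition-(L)-type behaviour locally at $w$: either $w$ emits no cycle at all, in which case all the nontrivial $\gamma_j\delta_j^\ast$ can be killed by multiplying by $P_\mu = \mu\mu^\ast$ for a long enough path $\mu$ out of $w$ along a sink-bound trail, landing in case (i); or $w$ lies on a cycle $c$ that has an exit, and one uses the exit edge together with the CK-relations to separate out and eliminate the ``cyclic'' terms, again reducing to a scalar multiple of a vertex, i.e. case (i). If instead $w$ lies on a cycle $\lambda$ without exits, then by Lemma~\ref{CycleNoExits} we have $\lambda\lambda^\ast = s(\lambda)$, and the corner $wL_R(E)w$ is (as in Remark~\ref{Omega_alpha}) isomorphic to $R[x,x^{-1}]$ via $\lambda \mapsto x$; tracking $a$ through this isomorphism exhibits $\mu^\ast a \nu$ as a nonzero Laurent polynomial $p(\lambda)$, which is case (ii).

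The main obstacle I anticipate is the bookkeeping in the ``mixed'' case: after cutting down to a corner at $w$, the element $a$ generically still contains terms $\gamma_j\delta_j^\ast$ with $\gamma_j,\delta_j$ of unequal length wrapping around a cycle at $w$, and one must show that a single further pair $(\mu,\nu)$ can be chosen to simultaneously (a) keep $\mu^\ast a\nu$ nonzero and (b) force it into one of the two clean forms. The way I would handle this is to order the terms by the length of $\gamma_j$ (or by the ``tail'' beyond $w$), isolate a term of minimal such length, and multiply by that tail's ghost/real path so that the minimal term becomes $rw$ while every longer term either vanishes (incompatibility via CK-1) or becomes a genuinely cyclic term $r_j \lambda^{k_j}$ forcing us into the $R[x,x^{-1}]$ picture; crucially, nonzero scalars are never lost because $rw\ne 0$ for $r\ne 0$. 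Since the paper explicitly says this follows \cite[Theorem 2.2.11]{AAS} with coefficients in $R$ rather than a field, I would present the argument as a transcription of that proof, checking only that no step uses invertibility of nonzero scalars — the only places a field is used there are to normalise leading coefficients, which is unnecessary here, and to conclude $rv\ne0$, which holds in $L_R(E)$ by construction.
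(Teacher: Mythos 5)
Your plan is correct and coincides with the paper's approach: the paper gives no argument of its own here, stating only that the proof follows \cite[Theorem 2.2.11]{AAS} verbatim with coefficients in $R$, which is exactly the transcription you propose. Your closing observation pinpoints the only genuine issue in the base change --- that the field is used there solely to normalise coefficients (dispensable) and to guarantee $rv\neq 0$ for $r\neq 0$ (which holds in $L_R(E)$ by construction) --- so nothing further is needed.
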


\begin{theorem}\label{UniquenessThm} Let $E$ be a graph and $R$ a commutative ring with unit. Consider $\Phi: L_R(E) \rightarrow \mathcal{A}$ a ring homomorphism. Then the following conditions are equivalent:
\begin{itemize}
\item[(i)] $\Phi$ is injective;
\item[(ii)] the restriction of $\Phi$ to $M_R(E)$ is injective;
\item[(iii)] both these conditions are satisfied:
\begin{itemize}
\item[(a)] $\Phi(rv) \neq 0$, for all $v \in E^0$ and for all $r \in R \setminus \{0\}$;
\item[(b)] for every distinguished path $\alpha$ the $\ast$ $R$-algebra $< \Phi(\omega_\alpha) >$ generated by $\Phi(\omega_\alpha)$ is $\ast$-isomorphic to $R[x,x^{-1}]$; that is, $< \Phi(\omega_\alpha) > \ \cong R[x,x^{-1}]$.
\end{itemize}
\end{itemize}
\end{theorem}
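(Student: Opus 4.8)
The plan is to prove the cyclic chain of implications $(i)\Rightarrow(ii)\Rightarrow(iii)\Rightarrow(i)$, since $(i)\Rightarrow(ii)$ is trivial (a restriction of an injective map is injective) and $(ii)\Rightarrow(iii)$ is almost as easy: if $\Phi$ is injective on $M_R(E)$, then since each $rv$ and each $\omega_\alpha$ lies in $M_R(E)$ (by Proposition \ref{CommutativeSubalgebra} and Remark \ref{Omega_alpha}), condition (a) is immediate, and for (b) one notes that $\langle\omega_\alpha\rangle\subseteq M_R(E)$ is $\ast$-isomorphic to $R[x,x^{-1}]$ by Remark \ref{Omega_alpha}, so $\Phi$ restricted to it is an injective $\ast$-homomorphism; one must additionally check that the image $\langle\Phi(\omega_\alpha)\rangle$ is exactly the $\ast$-algebra generated by $\Phi(\omega_\alpha)$ and that the inverse map is also a homomorphism, which follows because $\Phi$ sends the local unit $\alpha\alpha^\ast$ of $\langle\omega_\alpha\rangle$ to the local unit of $\langle\Phi(\omega_\alpha)\rangle$ and is multiplicative.

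The substantive implication is $(iii)\Rightarrow(i)$. First I would reduce to showing that $\ker\Phi=0$ by a contradiction argument: suppose $0\neq a\in\ker\Phi$. Apply the Reduction Theorem (Theorem \ref{ReductionThm}) to obtain paths $\mu,\nu$ with either $0\neq\mu^\ast a\nu=rv$ for some $r\in R\setminus\{0\}$, $v\in E^0$, or $0\neq\mu^\ast a\nu=p(\lambda)$ for a cycle $\lambda$ without exits and a non-zero Laurent polynomial $p$. Since $\mu^\ast a\nu\in\ker\Phi$ as well, in the first case we get $\Phi(rv)=0$, contradicting (a). In the second case, $\lambda$ is a cycle without exits, so $s(\lambda)$ is a distinguished vertex and, writing $\alpha=s(\lambda)$, we have $\omega_\alpha=\lambda$ (viewing the distinguished-vertex case of Remark \ref{Omega_alpha}, where the essential head has length zero); thus $p(\lambda)=p(\omega_\alpha)\in\langle\omega_\alpha\rangle$, and under the $\ast$-isomorphism $\Gamma_\alpha\colon\langle\omega_\alpha\rangle\to R[x,x^{-1}]$ of Remark \ref{Omega_alpha} this is a non-zero element. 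Now hypothesis (b) says $\langle\Phi(\omega_\alpha)\rangle\cong R[x,x^{-1}]$ as $\ast$-algebras; composing, the map $R[x,x^{-1}]\xrightarrow{\Gamma_\alpha^{-1}}\langle\omega_\alpha\rangle\xrightarrow{\Phi}\langle\Phi(\omega_\alpha)\rangle\cong R[x,x^{-1}]$ is a $\ast$-homomorphism fixing the generator $x$ (since $\Phi(\omega_\alpha)$ generates the target and corresponds to $x$), hence is the identity, so it is injective; therefore $\Phi(p(\omega_\alpha))\neq 0$, contradicting $p(\lambda)\in\ker\Phi$.

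The point requiring care — the main obstacle — is making the argument in the cycle-without-exits case fully rigorous: one must verify that the distinguished path attached to $s(\lambda)$ is indeed $\alpha=s(\lambda)$ with $\lambda_\alpha=\lambda$, i.e. that a cycle without exits based at $v$ makes $v$ a distinguished vertex in the precise sense of Definition following Lemma \ref{CycleNoExits}; this is immediate from the definitions since the trail $\tau=\lambda\lambda\lambda\cdots$ is periodic with period $\lambda$, hence essentially aperiodic of type (B)(ii) with seed $(v,\lambda)$ and essential head $v$. One must also confirm that the $\ast$-isomorphism in (b) is compatible with the $R$-module structure (so that a non-zero Laurent polynomial cannot be killed), which is part of the definition of ``$\ast$ $R$-algebra'' used throughout. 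A secondary subtlety is that $\Phi$ is only assumed to be a ring homomorphism, not an $R$-algebra homomorphism, but the Reduction Theorem and the hypotheses (a), (b) are phrased compatibly with this, so the argument goes through without needing $R$-linearity of $\Phi$ globally.

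Finally I would remark that this theorem simultaneously recovers the Graded Uniqueness Theorem \ref{GradedUniquenessTheorem} (in the graded setting condition (b) is automatic) and the Cuntz--Krieger Uniqueness Theorem for graphs satisfying Condition (L), since under Condition (L) there are no distinguished paths and (iii) collapses to (a) alone — matching \cite[Theorem 6.5]{T} and \cite[Theorem 3.7]{AMMS}.
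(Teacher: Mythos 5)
Your proposal is correct and follows essentially the same route as the paper: the trivial implication $(i)\Rightarrow(ii)$, the observation that $rv$ and $\omega_\alpha$ lie in $M_R(E)$ for $(ii)\Rightarrow(iii)$, and the Reduction Theorem \ref{ReductionThm} driving $(iii)\Rightarrow(i)$, with the cycle-without-exits case handled via the identification of $s(\lambda)$ as a distinguished vertex and the $\ast$-isomorphism $\langle\Phi(\omega_{s(\lambda)})\rangle\cong R[x,x^{-1}]$. The extra care you take in verifying that $\langle\omega_\alpha\rangle\to\langle\Phi(\omega_\alpha)\rangle$ is a genuine $\ast$-isomorphism is a reasonable refinement of the paper's terser statement but does not change the argument.
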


\begin{proof}Let us prove the equivalence between the statements.

(i) $\Rightarrow$ (ii)  It is obvious.

(ii) $\Rightarrow$ (iii) First let us check (a). Consider $rv$, for some $v \in E^0$ and $r \in R \setminus \{0\}$. But $rv = rvv^{\ast} \in \Delta(E) \subset M_R(E)$, so it follows immediately.

Now in order to see (b), note that for every distinguished path $\alpha$ and each $i\in \mathbb{Z}$ then $\Phi(\omega^i_\alpha) \neq 0$: this is straightforward since every $\omega_\alpha \in M_R(E)$ by Proposition \ref{CommutativeSubalgebra}.

Then we have a natural $\ast$-isomorphism $< \omega_\alpha > \ \cong \ < \Phi(\omega_\alpha) >$. We know on the other hand that $< \omega_\alpha > \ \cong R[x,x^{-1}]$ is a $\ast$-isomorphism by Remark \ref{Omega_alpha}. It follows therefore that $< \Phi(\omega_\alpha) > \ \cong R[x,x^{-1}]$ as desired.

(iii) $\Rightarrow$ (i) Suppose $0 \neq a \in L_R(E)$ such that $a \in {\rm Ker}\ \Phi$. By Theorem \ref{ReductionThm} there exist $\mu, \nu \in \text{Path}(E)$ and we have two possibilities.

 Assume $0 \neq \mu^{\ast}a\nu = rv$, for some $r \in R \setminus \{0\}$ and $v \in E^0$. But $a \in {\rm Ker}\ \Phi$, so $rv \in {\rm Ker}\ \Phi$ which is a contradiction with hypothesis (a).

 So necessarily $0 \neq \mu^{\ast}a\nu = p(\lambda)$, where $\lambda$ is a cycle without exits and $p(x)$ is a non-zero polynomial in $R[x,x^{-1}]$. Since $a \in {\rm Ker}\ \Phi$ we have $p(\lambda) \in {\rm Ker}\ \Phi$, so $\Phi(p(\lambda))=0$. Let $v\in \lambda^0$, then $v$ is a distinguished vertex and $\lambda=\lambda_v$, so by hypothesis (b) it is satisfied that $ g:<\Phi(\lambda)> \rightarrow R[x,x^{-1}]$ is an algebra isomorphism. We know $0 = \Phi(p(\lambda))$, then $g(\Phi(p(\lambda)))=0$. Assume that $p(\lambda)=r_{-m}\lambda^{-m}+ \ldots +r_n\lambda^{n}$ for certain  $m,n \in \mathbb{N}$. Then
 $0=g(\Phi(p(\lambda)))=g(\Phi(r_{-m} v)\Phi(\lambda^*)^{m}+....+\Phi(r_n v)\Phi(\lambda)^{n})=g(\Phi(r_{-m} v))x^{-m}+....+g(\Phi(r_n v))x^{n})$. This means that $g(\Phi(r_{i} v))= 0$ for all $i \in \{-m, \ldots, n\}$ which implies that $\Phi(r_{i} v)= 0$ for each $i$ giving us a contradiction with (a). \qedhere
\end{proof}

As a consequence we can obtain the Cuntz-Krieger Uniqueness Theorem (see \cite[Theorem 6.5]{T}). If the graph $E$ satisfies Condition (L) then there are no distinguished paths in $\text{Path}(E)$, so we get immediately:

\begin{theorem}{\rm (Cuntz-Krieger Uniqueness Theorem)} Let $E$ be a graph satisfying Condition (L) and let $R$ be a commutative ring with unit. Suppose $\Phi: L_R(E) \rightarrow \mathcal{A}$ is a ring homomorphism. Then the following conditions are equivalent:
\begin{itemize}
\item[(i)] $\Phi$ is injective;
\item[(ii)] the restriction of $\Phi$ to $M_R(E)$ is injective;
\item[(iii)] $\Phi(rv) \neq 0$, for all $v \in E^0$ and for all $r \in R \setminus \{0\}$.
\end{itemize}
\end{theorem}

By using the proof of (ii) $\Rightarrow$ (iii) in the proof of Theorem \ref{UniquenessThm} we can reformulate the Graded Uniqueness Theorem:

\begin{theorem}{\rm (Graded Uniqueness Theorem)}\label{GradedUniquenessThm} Let $E$ be a graph and $R$ a commutative ring with unit. If $\mathcal{A}$ is a $\mathbb{Z}$-graded ring and $\Phi: L_R(E) \rightarrow \mathcal{A}$ is a $\mathbb{Z}$-graded ring homomorphism, then the following conditions are equivalent:
\begin{itemize}
\item[(i)] $\Phi$ is injective;
\item[(ii)] the restriction of $\Phi$ to $M_R(E)$ is injective;
\item[(iii)] $\Phi(rv) \neq 0$, for all $v \in E^0$ and for all $r \in R \setminus \{0\}$.
\end{itemize}
\end{theorem}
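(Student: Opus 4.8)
The plan is to deduce the Graded Uniqueness Theorem as a corollary of the machinery already in place, essentially by combining Theorem \ref{UniquenessThm} with the older Graded Uniqueness Theorem \ref{GradedUniquenessTheorem} (the one phrased in terms of Cuntz-Krieger $E$-systems). The implications $(i) \Rightarrow (ii) \Rightarrow (iii)$ are cheap: $(i)\Rightarrow(ii)$ is trivial as restriction of an injective map, and $(ii)\Rightarrow(iii)$ is exactly the argument already given in the proof of Theorem \ref{UniquenessThm}, namely that $rv = rvv^{\ast} \in \Delta(E) \subseteq M_R(E)$, so injectivity on $M_R(E)$ forces $\Phi(rv)\neq 0$ for all $v\in E^0$ and all $r\in R\setminus\{0\}$. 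The only real content is $(iii)\Rightarrow(i)$.

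For $(iii)\Rightarrow(i)$, I would simply invoke Theorem \ref{GradedUniquenessTheorem}. The point is that a $\mathbb{Z}$-graded ring homomorphism $\Phi: L_R(E) \to \mathcal{A}$ with $\mathcal{A}$ graded is precisely the representation associated to a Cuntz-Krieger $E$-system $\Sigma = (\Phi(v))_{v\in E^0} \cup (\Phi(e))_{e\in E^1}$ inside $\mathcal{A}$, in which each $\Phi(v)$ is homogeneous of degree zero and each $\Phi(e)$ is homogeneous of degree one — this homogeneity is exactly the statement that $\Phi$ respects the grading (recall $v \in L_R(E)_0$ and $e \in L_R(E)_1$). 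Then hypothesis $(iii)$ is literally condition $(ii)$ of Theorem \ref{GradedUniquenessTheorem}, so that theorem gives injectivity of $\Phi$, which is $(i)$. One should note that Theorem \ref{GradedUniquenessTheorem} is stated for $R$-algebra homomorphisms whereas here $\Phi$ is only assumed to be a ring homomorphism; but a graded ring homomorphism out of $L_R(E)$ automatically respects the $R$-action, since for $r \in R$ and $v\in E^0$ one has $rv = (rv)v$ and the image of $R$ sits inside the $R$-algebra structure induced on (a corner of) $\mathcal{A}$ — alternatively, one passes to the subalgebra generated by the image and argues there, exactly as is implicitly done throughout the paper.

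Alternatively, and perhaps more in the spirit of the sentence preceding the statement ("By using the proof of $(ii)\Rightarrow(iii)$ in the proof of Theorem \ref{UniquenessThm}"), one can run the proof of Theorem \ref{UniquenessThm} verbatim and observe that hypothesis $(iii)(b)$ of that theorem — that $\langle \Phi(\omega_\alpha)\rangle \cong R[x,x^{-1}]$ for every distinguished path $\alpha$ — is now automatic. Indeed $\omega_\alpha = \alpha\lambda_\alpha\alpha^{\ast}$ is a homogeneous element of $L_R(E)$ of degree $l(\lambda_\alpha)\geq 1$, its powers $\omega_\alpha^n$ are homogeneous of degree $n\,l(\lambda_\alpha)$, pairwise of distinct (nonzero, for $n\neq 0$) degrees, and $\langle\omega_\alpha\rangle\cong R[x,x^{-1}]$ is a graded isomorphism with $R[x,x^{-1}]$ carrying its standard grading rescaled by $l(\lambda_\alpha)$; since $\Phi$ is graded and $\Phi(\omega_\alpha^0) = \Phi(\alpha\alpha^{\ast})\neq 0$ by hypothesis $(iii)$ applied to $v = r(\alpha)$ together with Lemma \ref{CycleNoExits} and faithfulness on $\Delta(E)$-type corners, no $R$-linear combination of distinct powers can die — the homogeneous components map to homogeneous components of distinct degrees and a graded homomorphism kills an element only if it kills each homogeneous component, each of which is (a scalar times) an $\omega_\alpha^n$ which is nonzero in the image. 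Hence $(iii)(b)$ holds, and then $(iii)\Rightarrow(i)$ of Theorem \ref{UniquenessThm} applies.

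The main obstacle is a bookkeeping one rather than a conceptual one: making precise the claim that the grading forces $\langle\Phi(\omega_\alpha)\rangle\cong R[x,x^{-1}]$, i.e. that the scalars $r_n \in R$ attached to the distinct powers $\omega_\alpha^n$ cannot conspire to land in the kernel. This needs the observation that $\Phi$ graded implies $\Phi(\sum_n r_n\omega_\alpha^n) = \sum_n r_n\Phi(\omega_\alpha)^n$ is a sum of homogeneous elements of pairwise distinct degrees (the degrees being the multiples $n\,l(\lambda_\alpha)$), so it vanishes only if each $r_n\Phi(\omega_\alpha)^n$ vanishes; and $\Phi(\omega_\alpha)^n = \Phi(\omega_\alpha^n)$ together with $r_n\Phi(\omega_\alpha^n) = \Phi(r_n\omega_\alpha^n)$ reduces everything to the case $n = 0$, i.e. to $\Phi$ being injective on $R\cdot(\alpha\alpha^{\ast})\cong R\cdot r(\alpha)$, which is precisely $(iii)$. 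Once this is laid out, the rest is a direct citation of the already-proved theorems, so I would keep the write-up short.
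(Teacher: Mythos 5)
Your proposal is correct, and your second argument is precisely what the paper intends by its one-line remark before the statement (the paper gives no written proof beyond pointing at the proof of $(ii)\Rightarrow(iii)$ of Theorem \ref{UniquenessThm}): gradedness of $\Phi$ makes condition $(iii)(b)$ of Theorem \ref{UniquenessThm} automatic, since the elements $\Phi(r_n\omega_\alpha^n)$ are homogeneous of pairwise distinct degrees $n\,l(\lambda_\alpha)$, so a vanishing sum forces each to vanish, and multiplying $\Phi(r_n\omega_\alpha^n)$ by $\Phi(\alpha^\ast)$ on the left, $\Phi(\omega_\alpha^{-n}\alpha)$ on the right reduces to $\Phi(r_n\, r(\alpha))=0$, which contradicts $(iii)$ unless $r_n=0$. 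Your first route, via Theorem \ref{GradedUniquenessTheorem}, is an equally legitimate shortcut (it is Tomforde's graded uniqueness theorem in disguise), with the minor caveat you already flag that one must either equip the image with the transported involution or note that the $E$-system formalism there is not really needed; either write-up is acceptable at the paper's own level of rigor.
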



\section*{acknowledgements}

The authors would like to thank to the referee for a careful reading of this paper and making many helpful suggestions which led to improve and reorganize it. Also we are thankful to Gonzalo Aranda Pino for his comments on a earlier version of this paper. The first author was partially supported by the Spanish MEC and Fondos FEDER through project MTM2013-41208-P, and by the Junta de Andaluc\'{\i}a and Fondos FEDER, jointly, through project FQM-7156. Part of this work was carried out during a visit of the first author to the Institute for Research in Fundamental Sciences (IPM-Isfahan) in Isfahan, Iran. The first author thanks this host institution for its warm hospitality and support. The research of the second author was in part supported by a grant from IPM (No. 94170419).

\end{document}